\theoremstyle{definition}
\newtheorem{definition}{Definition}[section]
\theoremstyle{plain}
\newtheorem{theorem}[definition]{Theorem}
\theoremstyle{plain}
\newtheorem{proposition}[definition]{Proposition}
\theoremstyle{plain}
\newtheorem{lemma}[definition]{Lemma}
\theoremstyle{plain}
\newtheorem{corollary}[definition]{Corollary}
\theoremstyle{definition}
\theoremstyle{remark}
\theoremstyle{remark}
\theoremstyle{definition}
\theoremstyle{definition}
\numberwithin{equation}{section}
\DeclareMathOperator{\Alg}{Alg}%
\DeclareMathOperator{\Clo}{Clo}%
\DeclareMathOperator{\POL}{Pol}%
\newcommand{\setofclone}[1]{\mathcal{L}_{#1}}
\renewcommand{\star}{*}
\newcommand{\finset}[1]{[{#1}]}
\DeclareMathAlphabet{\mathbfsl}{OT1}{cmr}{bx}{it}
\newcommand{\tupBold}[1]{\mathbfsl{#1}}
\renewcommand{\vec}[1]{\tupBold{#1}}
\newcommand{\ab}[1]{{\mathbf{#1}}}
\newcommand{\ari}[1]{_{#1}}
\newcommand{\arii}[1]{^{[#1]}}
\newcommand{\funset}[1]{\mathcal{#1}}
\newcommand{\restrict}[1]{|_{#1}}
\newcommand{\numberset}{\mathbb} %
\newcommand{\N}{\numberset{N}}
\newcommand{\Z}{\numberset{Z}}
\DeclarePairedDelimiter{\card}{\lvert}{\rvert}%
\newcommand{\potenza}[1]{\mathcal{P}(#1)} %
\newcolumntype{C}[1]{>{\centering}p{#1}} %
\DeclareMathAlphabet{\mathbfsl}{OT1}{cmr}{bx}{it}
\DeclareMathAlphabet{\mathsc}{OT1}{cmr}{m}{sc}
\DeclareMathOperator{\algeq}{\sim_{\text{alg}}}
\newcommand{\algclo}[2]{\text{V}_{#1}(#2)}
\newcommand{\projections}[1][A]{\mathcal{J}_{#1}}%
\newcommand{\essenar}[1]{\text{essArity}(#1)}
\title{On the number of universal algebraic geometries}
\author{Erhard Aichinger}
\address[Erhard Aichinger]{
Institut für Algebra,
Johannes Kepler Universität Linz, Altenberger Straße 69, 4040 Linz, Austria}
\email{erhard@algebra.uni-linz.ac.at}
\author{Bernardo Rossi}
\address[Bernardo Rossi]{
Institut für Algebra,
Johannes Kepler Universität Linz, Altenberger Straße 69, 4040 Linz, Austria}
\email{bernardo.rossi@jku.at}
\subjclass[2010]{08A40, (08A62, 08B05, 03C05)}
\keywords{Universal algebraic geometry, Clones, Algebraic sets}
\thanks{Supported by the Austrian Science Fund (FWF):~P33878.}
\date{\today}
\begin{document}
\begin{abstract}
The \emph{algebraic geometry} of a universal algebra $\mathbf{A}$ is defined as the collection of solution sets of systems of term equations. Two algebras $\mathbf{A}_1$ and $\mathbf{A}_2$ are called \emph{algebraically equivalent} if they have the same algebraic geometry. We prove that on a finite set $A$ with $\lvert A \rvert >3$ there are countably many algebraically inequivalent Mal'cev algebras and that on a finite set $A$ with $\lvert A \rvert >2$ there are continuously many algebraically inequivalent algebras.  
\end{abstract}
\maketitle

\section{Introduction} 
Universal algebraic geometry, introduced in \cite{Plo98,DanMyaRem12}, is based on the notion of \emph{algebraic sets}. Given a universal algebra $\ab{A}=(A;(f_i)_{i\in I})$,  a subset $B$ of $A^n$ is \emph{algebraic} if it is the solution set of a system of (possibly infinitely many) term equations in the language of $\ab{A}$. Following \cite{Pin17a}, we denote the collection of the algebraic subsets of $A^n$ by $\Alg\ari{n}\ab{A}$ and we define the \emph{universal algebraic geometry} of $\ab{A}$ by $\Alg\ab{A}:=\bigcup_{n\in\N}\Alg\ari{n}\ab{A}$. Clearly, for a universal algebra $\ab{A}$, $\Alg\ab{A}$ is completely determined by its clone of term functions (cf. \cite{BurSan81,PosKal79}). We define the universal algebraic geometry of a clone $\funset{C}$ on a set $A$ by $\Alg\funset{C}:=\Alg(A;\funset{C})$. Following \cite{Pin17a}, we say that two clones $\funset{C}$ and $\funset{D}$ on the same set $A$ are algebraically equivalent, $\funset{C}\algeq\funset{D}$, if $\Alg\funset{C}=\Alg\funset{D}$. 
In \cite{Pin17a, TotWal17} one finds examples of different clones that are algebraically equivalent; in fact Tóth and Waldhauser \cite{TotWal17} proved that on the two element set there are only finitely many algebraically inequivalent clones. Pinus showed that on each finite set there are only finitely many universal algebraic geometries closed under taking unions \cite{Pin17a, AicRos20}.
The aim of this note is to provide infinite families of algebraically inequivalent clones on a finite set.

Analysing the construction of $2^{\aleph_{0}}$ clones on $\{0,1,2\}$ in \cite{JanMuc59}, we obtain:
\begin{theorem}\label{teor:cont_many_alg_in_clones}
Let $A$ be a finite set with $\card{A}\geq 3$. Then there are $2^{\aleph_0}$ algebraically inequivalent clones on $A$.
\end{theorem}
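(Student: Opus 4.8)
The plan is to extract from the Janov--Mučnik construction a family of clones indexed by the subsets of an infinite set, and then to separate them not merely as clones but as algebraic geometries. Fix a set of totally symmetric operations $g_n$ on $A$ (for $n$ ranging over an infinite set $J\subseteq\N$) of the shape used in \cite{JanMuc59}: working with three fixed elements $0,1,2\in A$, let $g_n$ take the value $2$ on exactly the $n$ arguments in $\{0,1\}^n$ of weight one, and the value $0$ on every other tuple of $A^n$. For $S\subseteq J$ put $\funset{C}_S:=\Clo(\{g_n:n\in S\})$. The analysis in \cite{JanMuc59} supplies the crucial \emph{generation property} $g_m\in\funset{C}_S\iff m\in S$; in particular the $\funset{C}_S$ are $2^{\aleph_0}$ pairwise distinct clones. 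Our task is to upgrade \emph{distinct} to \emph{algebraically inequivalent}.

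First I would record the easy direction. For each $m$ the graph $\Gamma_m:=\{(\vec a,b)\in A^{m+1}:g_m(\vec a)=b\}$ is the solution set of the single term equation $g_m(x_1,\dots,x_m)=x_{m+1}$, so $\Gamma_m\in\Alg\funset{C}_S$ whenever $m\in S$. Hence, to prove the theorem it suffices to establish the \textbf{Key Lemma}: if $m\notin S$ then $\Gamma_m\notin\Alg\funset{C}_S$. Indeed, given $S\neq S'$ choose $m\in S\triangle S'$, say $m\in S\setminus S'$; then $\Gamma_m\in\Alg\funset{C}_S$ but, by the Key Lemma, $\Gamma_m\notin\Alg\funset{C}_{S'}$, whence $\Alg\funset{C}_S\neq\Alg\funset{C}_{S'}$ and $\funset{C}_S\not\algeq\funset{C}_{S'}$. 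Since there are $2^{\aleph_0}$ subsets $S$ while a finite set carries only countably many finitary operations and hence at most $2^{\aleph_0}$ clones, this yields exactly $2^{\aleph_0}$ algebraically inequivalent clones.

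To prove the Key Lemma I would show that $\Gamma_m$ is strictly smaller than its algebraic closure over $\funset{C}_S$: I would exhibit a point $p=(\vec a,c)\notin\Gamma_m$ such that every pair $t,s\in\funset{C}_S^{(m+1)}$ with $t(\vec x,g_m(\vec x))=s(\vec x,g_m(\vec x))$ for all $\vec x\in A^m$ also satisfies $t(\vec a,c)=s(\vec a,c)$; equivalently, no term equation of $\funset{C}_S$ that holds on $\Gamma_m$ separates $p$ from $\Gamma_m$. The decisive feature of the $g_n$ is that $2$ acts as a \emph{poisoning} value: every $g_n$ returns $0$ as soon as one argument lies outside $\{0,1\}$, and the $g_n$ take only the values $0$ and $2$, so a compound subterm is never equal to $1$. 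Restricting to arguments from $\{0,1\}$ together with the value $2$, one obtains a normal form in which a term of $\funset{C}_S$ can output $2$ only through a top operation $g_k$ with $k\in S$ detecting a weight-one pattern among genuine variables. When $m\notin S$ no such $g_k$ realises the length-$m$ weight-one pattern, and this should furnish two configurations — one in $\Gamma_m$, one outside — that no function of $\funset{C}_S$ tells apart, producing the point $p$.

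The hard part is precisely this last step, and it is genuinely stronger than the generation property. Knowing $g_m\notin\funset{C}_S$ does not suffice: the motivating examples \cite{Pin17a,TotWal17} show that distinct clones can share an algebraic geometry, so a priori $g_m$ could still be \emph{implicitly} definable from $\funset{C}_S$ — that is, have an algebraic graph — even when it is not a term function. Excluding this amounts to showing that the length-$m$ weight-one indicator is not an algebraic function of $\funset{C}_S$, which requires controlling \emph{all} pairs $t,s\in\funset{C}_S$ simultaneously on the relevant finite configurations, not a single composition. I expect the cleanest route is to promote the poisoning analysis to an invariant of $\funset{C}_S$ that is preserved under passage to the algebraic closure — a relation respected by every $\funset{C}_S$-equation but deliberately violated at $p$ — and then to verify via the $\{0,1,2\}$-normal form that $p$ lies in the closure of $\Gamma_m$ exactly when $m\notin S$.
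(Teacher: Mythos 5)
Your overall architecture is exactly the paper's: take the Janov--Mu\v{c}nik family of clones indexed by subsets of an infinite index set, observe that the graph $\Gamma_m$ of $g_m$ is algebraic whenever $m$ belongs to the index set, and reduce the theorem to the converse (your ``Key Lemma'', the paper's Proposition on $g_i^\bullet\in\Alg\ab{Z}_I\Leftrightarrow i\in I$). The easy direction, the use of the symmetric difference, and the cardinality bookkeeping are all correct.

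The problem is that the Key Lemma is the entire mathematical content of the argument, and you have not proved it; your last paragraph explicitly defers the decisive step (``I expect the cleanest route is to promote the poisoning analysis to an invariant\dots''). You correctly identify why the generation property $g_m\notin\funset{C}_S$ is insufficient --- $g_m$ could a priori be implicitly definable, i.e.\ have an algebraic graph without being a term function --- but identifying the obstacle is not the same as overcoming it. What the paper actually does here is substantial: it constructs an explicit superset $F'$ of every $\Clo(\ab{Z}_I)$, proves $F'$ is a clone closed under minors, and uses it to classify all term functions by essential arity (constants are $0$, essentially unary functions are projections, functions depending on at least two arguments never take the value $2$ and vanish off the weight-one pattern). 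A seven-case analysis then shows that any pair of terms agreeing on $\Gamma_i$ but disagreeing at $(1,\dots,1)$ must consist of the last projection and a function not depending on its last argument, and from this one reconstructs $g_i$ itself as a term function, so that implicit definability collapses to explicit definability and the generation property applies. None of this is routine, and your ``poisoning value'' heuristic, while pointing in the right direction, does not by itself control all pairs of terms simultaneously. A secondary gap: for $\card{A}>3$ you define the $g_n$ directly on $A^n$ and never verify that the separation argument survives the extra elements; the paper instead proves everything on $\{0,1,2\}$ and transfers the count via an explicit clone-extension map $\Phi$ satisfying $B\in\Alg\Phi(\funset{C})\Leftrightarrow B\cap A^n\in\Alg\funset{C}$, which also needs proof.
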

The proof will be given in Section \ref{sec:3elementset}.

A clone $\funset{C}$ is called a \emph{Mal'cev clone} if it contains a ternary function such that for all $a,b\in A$,
\[
d(a,b,b)= d(b,b,a)=a,
\]
and it is called \emph{constantive} if every unary constant function on $A$ is in $\funset{C}$.
Using Idziak's construction from \cite{Idz99}, we prove:
\begin{theorem}\label{teor:infinitely_many_lgebraically_inquivalent_malcev_clones}
Let $A$ be a finite set with $\card{A}\geq 4$. Then there are exactly $\aleph_0$ algebraically inequivalent constantive Mal'cev clones on $A$. 
\end{theorem}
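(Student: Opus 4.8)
The plan is to prove the two inequalities $\leq\aleph_0$ and $\geq\aleph_0$ separately, with the understanding that the entire difficulty sits in the upper bound: Theorem~\ref{teor:cont_many_alg_in_clones} shows that on a set with $\card{A}\geq 3$ there are already $2^{\aleph_0}$ algebraic geometries in general, so the Mal'cev law must be exploited in an essential way to force continuum-many clones down to only countably many geometries. The lower bound, by contrast, is where Idziak's construction enters, and it only has to produce \emph{infinitely many} pairwise inequivalent examples.

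First I would record the structure of algebraic sets forced by the two hypotheses. Since $\funset{C}$ is constantive, every nonempty member of $\Alg\ari{n}\funset{C}$ is a subuniverse of $\ab{A}^n$, being an intersection of equalizers of $n$-ary term functions (each term function is a homomorphism $\ab{A}^n\to\ab{A}$, so its equalizer with another is a subuniverse), and it contains the full diagonal $\{(a,\dots,a):a\in A\}$, because applying a constant operation to any solution produces a constant tuple. Since $\funset{C}$ is Mal'cev, a reflexive compatible binary relation $R$ is automatically a congruence: if $d$ is the Mal'cev term and $(a,b)\in R$, then applying $d$ coordinatewise to $(a,a),(a,b),(b,b)\in R$ returns $(d(a,a,b),d(a,b,b))=(b,a)$, giving symmetry, and transitivity follows similarly. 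Thus $\Alg\ari{2}\funset{C}\subseteq\Con(\ab{A})\cup\{\emptyset\}$, and more generally I would show that the higher algebraic sets are exactly a natural class of diagonal-containing subpowers of $\ab{A}$, so that $\Alg\funset{C}$ is faithfully encoded by the arity-indexed system of these ``diagonal subpowers''.

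For the upper bound I would then argue that this system is \emph{finitely determined}: for each constantive Mal'cev clone there is an arity $N=N(\funset{C})$ such that the whole sequence $(\Alg\ari{n}\funset{C})_{n\in\N}$ is recovered from its prefix $(\Alg\ari{n}\funset{C})_{n\leq N}$, the diagonal subpowers of arity $n>N$ being coordinate-built from the low-arity ones once the commutator-theoretic data of the finite Mal'cev algebra has saturated. Granting this, the image of the map $\funset{C}\mapsto\Alg\funset{C}$ lies in $\bigcup_{N}S_N$, where $S_N$ is the set of geometries determined by their restriction to arities $\leq N$; since distinct geometries in $S_N$ have distinct length-$N$ prefixes and there are only finitely many such prefixes over the finite set $A$, each $S_N$ is finite and the total count is at most $\aleph_0$. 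This finite-determination lemma is exactly the step I expect to be the main obstacle: a priori the sequence $(\Alg\ari{n}\funset{C})_n$ could realize $2^{\aleph_0}$ behaviours, as it does in the non-Mal'cev regime of Theorem~\ref{teor:cont_many_alg_in_clones}, and ruling this out forces one to use the full Mal'cev structure theory to show that the unbounded-arity data is discrete rather than continuum-valued, while allowing $N(\funset{C})$ itself to be unbounded across the family.

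For the lower bound I would invoke Idziak's construction from \cite{Idz99}, which (precisely when $\card{A}\geq 4$) supplies an infinite family $(\funset{C}_k)_{k\in\N}$ of Mal'cev clones on $A$; adjoining the unary constants makes each $\funset{C}_k$ constantive, and I would check that this does not merge the relevant distinctions. To see that infinitely many of them are pairwise algebraically inequivalent, I would exhibit for $k\neq l$ a diagonal subpower that is algebraic for one of the two clones but not for the other, necessarily of arity growing with $\max(k,l)$ --- this is the determination level $N$ isolated above, now realized concretely and tending to infinity. Combining the algebraic inequivalence of the $\funset{C}_k$ with the countability bound of the previous paragraph yields exactly $\aleph_0$ algebraic-equivalence classes, as claimed.
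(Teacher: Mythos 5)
Your proposal inverts where the work lies, and the part you defer is the whole theorem. The upper bound $\leq\aleph_0$ needs no new geometric ``finite determination'' lemma: by the cited results of Aichinger and Idziak there are only $\aleph_0$ constantive Mal'cev clones on a finite set to begin with, so there are trivially at most $\aleph_0$ equivalence classes; your finite-determination claim is only asserted (``Granting this\dots''), not proved, and is superfluous. The lower bound, which you dispatch by saying you ``would exhibit for $k\neq l$ a diagonal subpower that is algebraic for one of the two clones but not for the other,'' is exactly the statement that has to be proved and for which you give no argument. The paper does this in two genuinely nontrivial steps: (i) a base case on a $4$-element set, realizing it as $\Z_4$ and separating the clones $\POL(\Z_4;+,-,0,f_d)$ with $f_d(x_1,\dots,x_d)=2\prod_j x_j$ by showing (Lemma \ref{lemma:absorbing_polynomials}) that every absorbing polynomial of arity $>d$ is zero, so that the algebraic closure of $Q=\{\vec{a}:\exists s\colon \vec{a}(s)=0\}$ distinguishes the geometries; and (ii) an induction on $\card{A}$ using Idziak's one-point extension $\funset{C}\mapsto\funset{C}^\oplus$ together with Proposition \ref{prop:algebraic_equivalence_of_C_oplus}, which shows that $\oplus$ preserves algebraic \emph{in}equivalence. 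Note also that Idziak's construction does not directly hand you an infinite family on an arbitrary $A$ with $\card{A}\geq 4$; it is the transfer mechanism from $n$ to $n+1$ elements, and the seed family must be built by hand.

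Separately, the structural claims in your first paragraph are false and would derail the upper-bound route even if it were needed. A term function $t\colon A^n\to A$ is not in general a homomorphism $\ab{A}^n\to\ab{A}$ (in a nonabelian group, $t(x,y)=xy$ is not one), so equalizers of term functions need not be subuniverses of $\ab{A}^n$. For a \emph{constantive} clone, nonempty algebraic sets need not contain the diagonal: the singleton $\{(a,b)\}$ is the solution set of $\{x_1\approx c_a,\ x_2\approx c_b\}$ and is algebraic, so $\Alg\ari{2}\funset{C}\not\subseteq\Con(\ab{A})\cup\{\emptyset\}$. (Your diagonal argument would require idempotence, which is incompatible with being constantive on $\card{A}\geq 2$.) So the reduction of $\Alg\funset{C}$ to ``diagonal subpowers'' does not hold, and the proposal as written cannot be repaired without replacing both halves by arguments of the kind the paper actually gives.
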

The  proof will be given in Section \ref{sec:Malcev}. It relies on the following fact:
\begin{theorem}\label{teor:inf_many_expansions_Zp2n}
Let $p$ be a prime, and let $n\in\N$. Then there are exactly $\aleph_0$ algebraically inequivalent clones that contain $\POL(\Z_{np^2},+)$.
\end{theorem}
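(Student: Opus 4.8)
The plan is to prove separately that there are at least, and at most, $\aleph_{0}$ algebraically inequivalent clones above $\POL(\Z_{np^{2}},+)$. Throughout I use that any clone $\funset{C}\supseteq\POL(\Z_{np^{2}},+)$ makes $(\Z_{np^{2}};\funset{C})$ a constantive Mal'cev algebra expanding the cyclic group, so that $x-y+z$, the constant $0$, and all constants lie in $\funset{C}$. This permits a normalisation of equations: every equation $s(\vec{x})=t(\vec{x})$ with $s,t\in\funset{C}^{(n)}$ is equivalent over $\funset{C}$ to one of the form $f(\vec{x})=0$, whence
\[
\Alg\ari{n}\funset{C}=\Bigl\{\,\bigcap_{f\in F}f^{-1}(0)\ :\ F\subseteq\funset{C}^{(n)}\,\Bigr\}.
\]
Thus the whole geometry $\Alg\funset{C}$ is recorded by the zero-sets of the operations of $\funset{C}$, and comparing two geometries amounts to comparing these families of zero-sets in each arity.

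For the lower bound I would feed Idziak's construction the relevant feature of $\Z_{np^{2}}$, namely the cyclic subgroup of order $p^{2}$ and its order-$p$ subgroup $H$, to obtain for every $k\in\N$ an operation $f_{k}$ that acts like a nondegenerate $k$-linear form valued in $H$ and depending on its arguments only modulo $H$, so that it vanishes on affine data and $\fundegree{f_{k}}=k$. Letting $\funset{C}_{k}$ be generated by $\POL(\Z_{np^{2}},+)$ together with $f_{1},\dots,f_{k}$ gives a chain $\POL(\Z_{np^{2}},+)=\funset{C}_{0}\subsetneq\funset{C}_{1}\subsetneq\cdots$. To separate them algebraically I would show that $f_{k}^{-1}(0)\subseteq(\Z_{np^{2}})^{k}$ lies in $\Alg\ari{k}\funset{C}_{k}$ but not in $\Alg\ari{k}\funset{C}_{k-1}$. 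The point is that, because $f_{k}$ reduces its inputs modulo $H$ while producing values in $H$, composition never raises the fundamental degree: every operation of $\funset{C}_{k-1}$ has $\fundegree{\cdot}\leq k-1$, and no finite intersection of zero-sets of such operations can reproduce the genuinely degree-$k$ set $f_{k}^{-1}(0)$. Verifying this is the technical core of the lower bound, since $\funset{C}_{k-1}$ already contains many $k$-ary operations (via dummy arguments and affine substitution) and all of them must be excluded by a degree argument rather than a naive count of essential arity.

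For the upper bound---the surprising half, since Idziak's construction produces $2^{\aleph_{0}}$ distinct \emph{clones} above $\POL(\Z_{np^{2}},+)$---I would show that $\Alg\funset{C}$ is governed by a single countable invariant, the fundamental degree $d(\funset{C}):=\sup\{\fundegree{f}:f\in\funset{C}\}\in\N\cup\{\infty\}$. Because the clone is constantive, fixing arguments of a degree-$e$ operation yields operations of every smaller degree, so the degrees realised by $\funset{C}$ form an initial segment of $\N$ and are captured by $d(\funset{C})$ alone. The decisive step is a collapse lemma: if $d(\funset{C})=d(\funset{D})$ then $\Alg\funset{C}=\Alg\funset{D}$. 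Concretely, for each $d$ the operations of fundamental degree at most $d$ form a clone $\funset{M}_{d}$ (the cross terms that would exceed degree $d$ vanish in the nilpotent regime), and one must prove that the zero-set of any operation of degree at most $d(\funset{C})$ is already algebraic over $\funset{C}$, so that $\funset{C}\algeq\funset{M}_{d(\funset{C})}$. The clones $\funset{M}_{d}$ for $d\in\N\cup\{\infty\}$ then account for all but finitely many geometries, the remaining non-nilpotent regime between $\funset{M}_{\infty}$ and the full clone contributing only finitely many more; hence at most $\aleph_{0}$ geometries in total, and exactly $\aleph_{0}$ once combined with the chain $(\funset{C}_{k})_{k}$.

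The main obstacle is precisely this collapse lemma. The whole difficulty of the theorem is concentrated in showing that algebraic equivalence is coarse enough to erase the uncountably many distinctions that Idziak's construction introduces among clones of a fixed fundamental degree, retaining only the value $d(\funset{C})$. Proving that one arbitrary operation of degree $e$, together with the affine clone, already defines---up to intersection and in every power---the zero-sets of all operations of degree at most $e$ is the step I expect to demand the most care, since it requires controlling operations of unbounded arity coming from an otherwise arbitrary clone above $\POL(\Z_{np^{2}},+)$.
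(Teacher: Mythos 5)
Your overall architecture for the lower bound (a strictly increasing chain of clones above $\POL(\Z_{np^2},+)$ generated by multilinear operations with values in the subgroup $np\Z_{np^2}$) matches the paper's, but the step you yourself flag as ``the technical core'' --- that $f_k^{-1}(0)$ is not algebraic with respect to $\funset{C}_{k-1}$ because no intersection of zero-sets of lower-degree operations can reproduce it --- is precisely the part that needs a proof, and you give none: you never define the degree function you are relying on, and $\funset{C}_{k-1}$ contains $k$-ary operations of essentially arbitrary shape obtained by composing $f_1,\dots,f_{k-1}$ with group polynomials, so excluding all of them requires an explicit normal form, not an appeal to ``the nilpotent regime.'' The paper closes this gap with a different and cheaper witness: instead of $f_l^{-1}(0)$ it uses $Q=\{\vec{a}\in\Z_{np^2}^{l}\mid \exists s\colon \vec{a}(s)=0\}$. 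Any polynomial $t$ of $\ab{A}_i:=(\Z_{np^2};+,-,0,f_i)$ that vanishes on $Q$ is \emph{absorbing}, and a short computation with a monomial normal form (every polynomial of $\ab{A}_i$ is a sum of monomials that are either affine or of total degree between $2$ and $i$ with coefficient divisible by $np$) shows that every absorbing polynomial of arity at least $i+1$ is identically zero. Hence $\algclo{\POL(\ab{A}_i)}{Q}=\Z_{np^2}^{l}$ for $l>i$, while $f_l$ itself witnesses $(1,\dots,1)\notin\algclo{\POL(\ab{A}_l)}{Q}$. Some concrete computation of this kind is unavoidable, and your sketch does not contain it.

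For the upper bound your premise is false and the remedy you propose is both unproved and unnecessary. Idziak's construction does \emph{not} produce $2^{\aleph_0}$ clones above $\POL(\Z_{np^2},+)$: every clone containing $\POL(\Z_{np^2},+)$ is a constantive Mal'cev clone, and there are only $\aleph_0$ such clones on a finite set (\cite{Aic10}, as recorded in the paper's Table 1). The bound of $\aleph_0$ on the number of geometries is therefore immediate, with no need for your ``collapse lemma'' asserting that clones of equal fundamental degree are algebraically equivalent --- a much stronger structural claim than the theorem requires, which you identify as the main obstacle and for which you offer no argument. As written, the proposal establishes neither inequality.
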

The proof will be given in Section \ref{sec:znp2}.

In Table 1 we summarize our current knowledge on the number of algebraically inequivalent clones on a finite set.

\begin{table}
\begin{tabular}{C{65mm}C{30mm}C{35mm}}
\toprule
Property of the clone on an $n$-element set & Number of clones & Number of clones modulo $\algeq$\tabularnewline
\midrule
$n=2$ & $\aleph_0$ \cite{Pos41} & finite \cite{TotWal17}\tabularnewline 
$n \ge 3$ & $2^{\aleph_0}$ \cite{JanMuc59} & $2^{\aleph_0}$ (Theorem \ref{teor:cont_many_alg_in_clones})\tabularnewline
constantive, $n\geq 3$ & $2^{\aleph_0}$ \cite{AgoDemHan83} & ?\tabularnewline
equationally additive & ? & finite \cite{Pin17a,AicRos20} \tabularnewline
 
 Mal'cev, $n\leq3$ & finite \cite{Bul01} & finite \tabularnewline
 constantive, Mal'cev, $n\geq 4$  & $\aleph_0$ \cite{Aic10,Idz99} & $\aleph_0$ (Theorem  \ref{teor:infinitely_many_lgebraically_inquivalent_malcev_clones}) \tabularnewline
 contains $\Clo(\Z_n, +)$, $n$ squarefree & finite \cite{Fio21,AicMay07,May08}& finite\tabularnewline
  contains $\POL(\Z_{mp^2}, +)$, $m\geq 1$  & $\aleph_0$ \cite{Aic10, Idz99}& $\aleph_0$ (Theorem \ref{teor:inf_many_expansions_Zp2n})\tabularnewline
\bottomrule
\end{tabular}
\caption{The first column lists the properties of the clones we consider, the second column reports the number of such clones and the third column lists the number of such clones modulo $\algeq$. The number $p$ is assumed to be prime.
  Question marks represent open problems.}
\end{table}

\section{Notation}\label{sec:notation}
We write $\N$ for the set of positive integers and for $n\in \N$, $\finset{n}:=\{1,\dots, n\}$.
For a set $A$ the $i$-th component of $\vec{a}\in A^n$ is denoted by $a_i$ and $\vec{a}(i)$.
We will use the notions \emph{clone}, $\POL\ari{n}(\ab{A})$ and $\Clo\ari{n}(\ab{A})$ as they are commonly used in universal algebra \cite{BurSan81,MckMcnTay88}. For $k\in\N$ with $k\leq n$, we define the $k$-th $n$-ary projection $\pi_k^{(n)}:A^n\to A$ by $\pi_k^{(n)}(a_1,\dots,a_n)=a_k$ for all $\vec{a}\in A^n$. We set $\projections:=\{\pi_k^{(n)}\mid n\in\N \text{ and }k\leq n\}$. 
For a clone $\funset{C}$ on $A$, $\funset{C}\arii{n}$ is the set of $n$-ary functions in $\funset{C}$. 
For $k\in\N$, for $\sigma\colon \finset{k}\to\finset{n}$ and for $f\in\funset{C}\arii{k}$, we define $f_\sigma\in\funset{C}\arii{n}$ by $f_\sigma(x_1,\dots,x_n):=f(x_{\sigma(1)},\dots,x_{\sigma(k)})$ for all $x_1,\dots, x_n\in A$, and we call $f_\sigma$ a \emph{minor} of $f$. 
If $t$ is a $k$-ary term in the language of a universal algebra $\ab{A}$, we write $t_\sigma$ for the term $t(x_{\sigma(1)},\dots,x_{\sigma(k)})$. Observe that in this case $(t_\sigma)^\ab{A}= (t^\ab{A})_\sigma$. 

Let $A$ be a set and let $\funset{C}$ be a clone on $A$. For $n\in\N$ and for $X\subseteq A^n$, we define $\algclo{\funset{C}}{X}$ to be the intersection of all the elements of $\Alg\ari{n}\funset{C}$ that contain $X$ as a subset. Since the intersection of a collection of elements of $\Alg\ari{n}\funset{C}$ is an element of $\Alg\ari{n}\funset{C}$, we infer that $\algclo{\funset{C}}{X}\in\Alg\ari{n}\funset{C}$. 
The following lemma will be used to assess whether a set $X$ is algebraic with respect to a clone $\funset{C}$, which is equivalent to $X=\algclo{\funset{C}}{X}$. 
\begin{lemma}\label{lemma:basic_characterization_algebraic_closure}
Let $A$ be a set, let $\funset{C}$ be a clone on $A$, let $n\in \N$, let $X\subseteq A^n$, and let $\vec{a}\in A^n$. Then we have 
\[\vec{a}\in \algclo{\funset{C}}{X} \Longleftrightarrow \Bigl(\forall f,g\in \funset{C}\arii{n}: f\restrict{X}=g\restrict{X}\Rightarrow f(\vec{a})=g(\vec{a})\Bigr ).\]
\end{lemma}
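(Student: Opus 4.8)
The plan is to reduce everything to the \emph{basic} algebraic sets, namely the solution sets of single term equations, and to observe that membership of $X$ in such a set corresponds exactly to two $n$-ary functions agreeing on $X$. Concretely, for $f,g\in\funset{C}\arii{n}$ write $E_{f,g}:=\{\vec{b}\in A^n\mid f(\vec{b})=g(\vec{b})\}$; since $\funset{C}$ is a clone, $f$ and $g$ are $n$-ary term functions of $(A;\funset{C})$, so $E_{f,g}$ is the solution set of the single equation $f=g$ and hence lies in $\Alg\ari{n}\funset{C}$. By the meaning of the restriction we have $X\subseteq E_{f,g}$ if and only if $f\restrict{X}=g\restrict{X}$. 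Moreover every member of $\Alg\ari{n}\funset{C}$ is, being the solution set of a (possibly infinite) system of term equations, an intersection of sets of the form $E_{f,g}$.

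I would then prove the contrapositive of the implication $(\Rightarrow)$. Assume there are $f,g\in\funset{C}\arii{n}$ with $f\restrict{X}=g\restrict{X}$ but $f(\vec{a})\neq g(\vec{a})$. Then $E_{f,g}\in\Alg\ari{n}\funset{C}$ contains $X$ but not $\vec{a}$. Since $\algclo{\funset{C}}{X}$ is by definition contained in every element of $\Alg\ari{n}\funset{C}$ containing $X$, we get $\algclo{\funset{C}}{X}\subseteq E_{f,g}$ and hence $\vec{a}\notin\algclo{\funset{C}}{X}$.

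For the converse implication $(\Leftarrow)$ I would use the description of algebraic sets as intersections of the $E_{f,g}$. Suppose that $f(\vec{a})=g(\vec{a})$ holds for all $f,g\in\funset{C}\arii{n}$ with $f\restrict{X}=g\restrict{X}$. To show $\vec{a}\in\algclo{\funset{C}}{X}$ it suffices to show that $\vec{a}$ lies in every $B\in\Alg\ari{n}\funset{C}$ with $X\subseteq B$. Fix such a $B$ and write $B=\bigcap_{(f,g)\in S}E_{f,g}$ for a suitable family $S\subseteq\funset{C}\arii{n}\times\funset{C}\arii{n}$. For each $(f,g)\in S$ we have $X\subseteq B\subseteq E_{f,g}$, so $f\restrict{X}=g\restrict{X}$; by hypothesis $f(\vec{a})=g(\vec{a})$, i.e.\ $\vec{a}\in E_{f,g}$. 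As this holds for all $(f,g)\in S$, we conclude $\vec{a}\in B$, which finishes this direction.

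I expect no serious obstacle: the only point needing a little care is the reduction of a general algebraic set to an intersection of basic ones, i.e.\ that a system of term equations in the variables $x_1,\dots,x_n$ may be taken to consist of equations $f=g$ with both sides realized as $n$-ary functions in $\funset{C}\arii{n}$ (padding with dummy variables where necessary, using that $\funset{C}$ is closed under minors). Once this bookkeeping is in place, both directions follow immediately from the equivalence $X\subseteq E_{f,g}\iff f\restrict{X}=g\restrict{X}$.
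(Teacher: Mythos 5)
Your proof is correct and follows essentially the same route as the paper: the forward direction exhibits the single-equation solution set $E_{f,g}$ as an algebraic set containing $X$ but not $\vec{a}$, and the reverse direction decomposes an arbitrary algebraic $B\supseteq X$ into the solution sets of the individual equations of its defining system and applies the hypothesis to each. The paper phrases the latter via a system $\{p_i\approx q_i\mid i\in I\}$ rather than an explicit intersection $\bigcap_{(f,g)\in S}E_{f,g}$, but this is only a notational difference.
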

\begin{proof}
  ``$\Leftarrow$'': 
  We assume that for all $f,g\in \funset{C}\arii{n}$ with $f\restrict{X}=g\restrict{X}$, we have $f(\vec{a})=g(\vec{a})$. We show that $\vec{a}\in \algclo{\funset{C}}{X}$. To this end, we show that for each $B\in \Alg\ari{n}\funset{C}$ with
  $X\subseteq B$, we have
  $\vec{a}\in B$. Since $B$ is algebraic with respect to $\funset{C}$, there exists a system of $\funset{C}$-equations $\{p_i\approx q_i\mid i\in I\}$ such that $B=\{\vec{x}\in A^n\mid \forall i\in I\colon p_i(\vec{x})=q_i(\vec{x})\}$. Since $X\subseteq B$,
  we have $p_i\restrict{X}=q_i\restrict{X}$
for all $i\in I$.
Therefore, the assumption yields $p_i(\vec{a})=q_i(\vec{a})$ for all $i\in I$,
and thus $\vec{a}\in B$.

``$\Rightarrow$'':
We  assume that there exist $f,g\in \funset{C}\arii{n}$ such that $f\restrict{X}=g\restrict{X}$ and $f(\vec{a})\neq g(\vec{a})$. Then $B:=\{\vec{x}\in A^n \mid f(\vec{x})=g(\vec{x})\}$ satisfies $B\in\Alg\ari{n}\funset{C}$, $X\subseteq B$, and $\vec{a}\notin B$. Thus, $\vec{a}\notin \algclo{\funset{C}}{X}$. 
\end{proof}
\section{Countably many algebraically inequivalent constantive expansions of $\Z_{np^2}$}\label{sec:znp2}
Let $\ab{A}=(A; +,-,0,(f_i)_{i\in I})$ be an expanded group. Following \cite{Aic14}, a function $f\colon A^{n}\to A$ is \emph{absorbing} if $f(a_1,\dots, a_n)=0$ for all $a_1, \dots, a_n \in A$ with $0\in\{a_1,\dots, a_n\}$.

\begin{lemma}\label{lemma:absorbing_polynomials}
Let $p$ be a prime, let $n,d,k\in \N$ with $d\geq 2$ and $k\geq d+1$, let $\ab{A}_d$ be the algebra $(\Z_{np^2};+,-,0,f_d)$, where $f_d\colon \Z_{np^2}^d\to \Z_{np^2}$ is defined by
\[f_d(x_1,\dots, x_d)=np\prod_{j=1}^{d} x_j \text{ for  }x_1,\dots, x_d \in \Z_{np^2},\]
and let $g\in \POL\ari{k}(\ab{A}_d)$. If $g$ is absorbing, then
$g$ is the constant zero function.
\end{lemma}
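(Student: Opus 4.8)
The plan is to first establish a normal form for the $k$-ary polynomials of $\ab{A}_d$, then use the absorbing hypothesis to eliminate the ``linear'' part, and finally transfer the problem to a degree argument over the field $\Z_p$.

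I would begin by isolating the subgroup $N := np\,\Z_{np^2}$, which is the image of $f_d$ and has order $p$. Two observations drive everything. First, multiplication by $np$ factors through $\Z_p$: for $z\in\Z_{np^2}$ the value $np\cdot z$ depends only on $z\bmod p$, since $np\cdot p = np^2 = 0$. Second --- and this is the structural heart of the argument --- if any argument of $f_d$ lies in $N$, then $f_d$ returns $0$, because $N\subseteq p\,\Z_{np^2}$ and an extra factor divisible by $p$ makes the $np$-multiple vanish. Consequently, when $f_d$ is composed with polynomials $t_1,\dots,t_d$, only the residues $t_i\bmod p$ matter, and these coincide with the residues of the ``affine parts'' of the $t_i$. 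Proceeding by induction on the construction of polynomials (projections, constants, $+$, $-$, $f_d$), I would show that every $g\in\POL\ari{k}(\ab{A}_d)$ can be written as $g = \ell_0 + \sum_j np\prod_{i=1}^{d}\ell_{j,i}$, where $\ell_0$ and all $\ell_{j,i}$ are affine functions of $\ab{A}_d$. The only nontrivial closure step is applying $f_d$, which by the two observations produces a single monomial $np\prod_i\ell_{j,i}$ whose factors may be taken affine.

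Next I would use that $g$ is absorbing, i.e.\ $g$ vanishes whenever some coordinate is $0$; equivalently $g\restrict{x_m=0}\equiv 0$ for every $m$. All monomials $np\prod_i\ell_{j,i}$ take values in $N$, so modulo $N$ we have $g\equiv\ell_0$. Reducing the identities $g\restrict{x_m=0}\equiv 0$ modulo $N$ forces every coefficient and the constant term of the affine map $\ell_0$ to lie in $N$ (here $k\geq 2$ is needed so that each variable can be spared). Hence $\ell_0$ is itself $N$-valued, and therefore $g$ is $N$-valued. Composing with the isomorphism $\phi\colon N\to\Z_p$, $np\cdot m\mapsto m\bmod p$, I obtain a function $G:=\phi\circ g$. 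Because every building block depends on its inputs only modulo $p$, $G$ factors as a genuine polynomial function $\Z_p^{k}\to\Z_p$ that is a sum of an affine form and products of at most $d$ affine forms; in particular $G$ has total degree at most $d$ after reducing each variable to exponent $<p$, since such reduction never raises the degree. Moreover $G$ inherits the absorbing property: since $G$ depends on its inputs only modulo $p$ and $g$ kills any coordinate equal to $0$, $G$ vanishes whenever some coordinate of $\Z_p^{k}$ is $0$.

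The final step is a degree count, which I expect to be the cleanest part. Viewing $G$ as a reduced polynomial over the field $\Z_p$, the absorbing property means $G\restrict{x_m=0}\equiv 0$, which forces $x_m\mid G$ for each $m$; as the $x_m$ are pairwise coprime, $x_1\cdots x_k\mid G$. If $G\neq 0$ this yields $\deg G\geq k\geq d+1$, contradicting $\deg G\leq d$. Hence $G=0$, and since $\phi$ is injective and $g$ is $N$-valued, $g$ is the constant zero function. The main obstacle, and the step deserving the most care, is the normal form in the first paragraph: one must argue rigorously that nesting $f_d$ inside $f_d$ collapses (so the arguments of every surviving $f_d$ can be taken affine) and that this normal form is preserved under all the fundamental operations; once this is in place, the absorbing hypothesis together with $k\geq d+1$ and the divisibility-degree argument finish the proof quickly.
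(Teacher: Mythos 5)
Your argument is correct, and its first half is essentially the paper's: the paper also begins by exhibiting a set of representations closed under the fundamental operations, namely sums of monomials $ax_{i_1}^{n_1}\cdots x_{i_r}^{n_r}$ that are constant, linear, or of total degree between $2$ and $d$ with $np\mid a$ --- which is exactly what your normal form $\ell_0+\sum_j np\prod_i\ell_{j,i}$ becomes upon expansion, and which rests on the same two observations (nested occurrences of $f_d$ collapse to $0$, and only residues mod $p$ of the arguments of $f_d$ matter). Where you genuinely diverge is in the endgame. The paper stays inside $\Z_{np^2}$ and argues combinatorially: substituting $x_m=0$ shows that the monomials omitting $x_m$ sum to the zero function and can be discarded, so every surviving monomial involves all $k$ variables and hence has total degree at least $k\geq d+1$, violating the degree bound --- no uniqueness of representations is ever needed, only ``induces the zero function.'' You instead first force the affine part $\ell_0$ into $N=np\,\Z_{np^2}$, transport the whole ($N$-valued) function along $N\cong\Z_p$ to a reduced polynomial of degree at most $d$ over the field $\Z_p$, and conclude via $x_1\cdots x_k\mid G$. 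Your route costs two extra reductions (the mod-$N$ analysis of $\ell_0$, with the correctly noted use of $k\geq 2$, and the passage to reduced polynomials over $\Z_p$, whose uniqueness you must invoke), but it buys a cleaner and more conceptual final step, since over a field the absorbing condition translates directly into divisibility and the degree count is immediate. Both proofs are sound; the paper's is shorter and more elementary, yours isolates more clearly why the statement is really a fact about low-degree polynomials over $\Z_p$.
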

Lemma \ref{lemma:absorbing_polynomials} states that the algebra $\ab{A}_d$ is $d$-supernilpotent (cf. \cite{AicMud13}). We provide a proof that makes no use of the notion of supernilpotency. 
\begin{proof}
Let $\funset{C}_d$ be the set of all operations on $\Z_{np^2}$ that are induced by a sum of monomials of the form $ax_{i_1}^{n_1}\dots x_{i_r}^{n_r}$, where $n_1,\dots, n_r\in\N$, $x_{i_1},\dots, x_{i_r}$ are pairwise distinct variables, $a\in\Z_{np^2}$, and the following property is satisfied:
\begin{equation}
  \label{eq:referee_a_lemma_4} 
   (r=0) \text{ or } (r=n_1=1) \text{ or }  
     (2\leq n_1 +\dots + n_r\leq d \text{ and } np \text{ divides } a).
\end{equation}
It is clear that $\{+,-,0, f_d\}\subseteq \funset{C}_d$, that the projection clone satisfies $\projections[\Z_{np^2}] \subseteq \funset{C}_d$, and that $\funset{C}_d$ contains the constant functions. Furthermore, it is straightforward to verify that for $t_1,t_2,\dots, t_d\in\funset{C}_d$ we have that $t_1 +t_2\in\funset{C}_d$, $t_1 -t_2\in\funset{C}_d$ and $f_d(t_1,\dots, t_d)\in\funset{C}_d$.
Hence $\POL(\ab{A}_d)\subseteq \funset{C}_d$. We assume that $g\in\POL\ari{k}(\ab{A}_d)$ is absorbing, and consider the representation of $g$ as a sum of monomials, each one satisfying \eqref{eq:referee_a_lemma_4}. Since $g$ is absorbing, $g(0,x_2,\dots, x_k)$ is the constant $0$-function. Thus, the sum of those monomials of $g$ that do not involve $x_1$ induces the constant zero function. 
Therefore, there exists a representation of $g$ as a sum of monomials of the above form, each of them involving the variable $x_1$. 
A similar argument applies to all the other variables $x_2,\dots, x_k$. Therefore, there exists a (possibly empty) sum of monomials that induces the same polynomial function as $g$ such that each monomial involves all the variables $x_1,\dots, x_k$ and satisfies \eqref{eq:referee_a_lemma_4}.
If $x_{i_1}^{n_1} \dots x_{i_r}^{n_r}$ is such a monomial, then $r \ge k$,
and therefore $n_1 + \cdots +n_r \ge k$. Since $k \ge 3$,
\eqref{eq:referee_a_lemma_4} implies
$d \ge n_1 + \cdots +n_r \ge k$, contradicting the assumption $k \ge d+1$.
Hence $g$ is induced by the empty sum of monomials, and therefore it is the constant $0$-function. 
\end{proof}

\begin{proof}[Proof of Theorem \ref{teor:inf_many_expansions_Zp2n}]
For each $d\in \N \setminus \{1\}$, let $\ab{A}_d$ and $f_d$ be as in the statement of Lemma \ref{lemma:absorbing_polynomials}. For $d\in \N \setminus \{1\}$, for $m\in \N$ and for $X\subseteq \Z_{np^2}^m$, we use $\algclo{d}{X}$ as a shorthand for $\algclo{\POL(\ab{A}_d)}{X}$.
Let $l,i\in \N$ with $l>i\geq 2$. We claim that $\POL(\ab{A}_i)\not\sim_{\text{alg}}\POL(\ab{A}_l)$.
For proving this, we let
\[Q:=\{\vec{a}\in \Z_{np^2}^l\mid \exists s\in\finset{l}: \vec{a}(s)=0\}\]
and show that $\algclo{i}{Q}\neq \algclo{l}{Q}$; as a consequence $\POL(\ab{A}_i)\not\sim_{\text{alg}}\POL(\ab{A}_l)$.

First we prove that $(1,\dots,1)\notin\algclo{l}{Q}$. To this end, we consider the term equation $f_l(x_1,\dots, x_l)\approx 0$. We have that $f_l(1,\dots, 1)=np$ and $f_l\restrict{Q}=0$. Hence Lemma~\ref{lemma:basic_characterization_algebraic_closure} yields $(1,\dots,1) \not\in \algclo{l}{Q}$.

We now prove that $\algclo{i}{Q}=\Z_{np^2}^{l}$. 
To this end, let us consider an equation of the form $t_1(x_1,\dots, x_{l})\approx t_2(x_1,\dots,x_{l})$ with $t_1,t_2\in \POL\ari{l}(\ab{A}_i)$. This equation is equivalent to $(t_1-t_2) \, (x_1,\dots, x_l)\approx 0$. Hence it suffices to consider equations of the form $t(x_1,\dots,x_{l})\approx 0$.
If $t(x_1,\dots,x_{l})\approx 0$ is satisfied by all elements of $Q$, then  the function $t$ is absorbing  and has arity greater than $i$. Lemma~\ref{lemma:absorbing_polynomials} implies that $t$ is the constant zero function. Therefore, the equation $t(x_1,\dots, x_l)\approx 0$ is satisfied by all elements in $\Z_{np^2}^l$. Hence Lemma~\ref{lemma:basic_characterization_algebraic_closure} yields $(1,\dots 1)\in \algclo{i}{Q}$ and therefore $\algclo{i}{Q}\neq \algclo{l}{Q}$.

We conclude that $(\POL(\ab{A}_i))_{i\in \N\setminus\{1\}}$ is an infinite family of algebraically inequivalent clones.
\end{proof}

\section{Countably many algebraically inequivalent constantive Mal'cev clones on finite sets with at least 4 elements}\label{sec:Malcev}
We report a construction from {\cite[Section 3]{Idz99}}: Let $A$ be a finite set such that $\card{A}\geq 4$ and let $\funset{C}$ be a constantive clone on $A$ with a Mal'cev term $p$. 
The construction picks an element $1\in A$ and an element $0\notin A$ and constructs a new clone $\funset{C}^\oplus$ on the set $A^\oplus :=A \cup \{0\}$.

For all $k\in\N$ and for all $f\in \funset{C}$, the operation $f^\oplus\colon (A^\oplus)^k\to A^\oplus$ is defined by
\[
f^\oplus(x_1,\dots, x_k):=
\begin{cases}
f(x_1,\dots,x_k) & \text{ if } (x_1,\dots, x_k)\in A^k,\\
0 &\text{ otherwise.}
\end{cases}
\]

A binary operation $\cdot$ on $A^\oplus$ is defined by
\[
x\cdot y:=
\begin{cases}
0 &\text{ if } (x,y)\in A^2,\\
1 &\text{ if } x=y=0,\\
x &\text{ if } x\in A \text{ and } y=0,\\
y &\text{ if } x=0 \text{ and } y\in A.
\end{cases}
\]

Finally, $\funset{C}^\oplus$ is defined as the clone on $A^\oplus$ generated by $\{f^\oplus \mid f\in \funset{C}\}\cup \{\cdot\}$ and all unary constant operations.

\begin{lemma}\label{lemma:on_shape_of_C_oplus}
Let $A$ be a finite set with $\card{A}\geq 4$, let $\funset{C}$ be a constantive clone on $A$ and let $f\in \funset{C}^\oplus$. Then
  \begin{equation} \label{eq:fprop}
    f\restrict{A^k}=0 \text{ or there exists } \hat{f}\in \funset{C} \text{ such that }
    f\restrict{A^k}=\hat{f}.
  \end{equation}  
\end{lemma}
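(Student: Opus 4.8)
The plan is to prove the statement by induction on the generation of $\funset{C}^\oplus$ as a clone: I would verify a suitable invariant for the generators $\{f^\oplus\mid f\in\funset{C}\}\cup\{\cdot\}$, for the unary constants, and for the projections, and then show that the invariant is preserved under superposition. The naive choice of invariant, namely \eqref{eq:fprop} itself, is however too weak to survive a composition $f=g(h_1,\dots,h_m)$: even if every $h_j\restrict{A^k}$ is either $0$ or a $\funset{C}$-operation, those inner functions that are identically $0$ on $A^k$ feed the new element $0\in A^\oplus$ into the outer function $g$, and \eqref{eq:fprop} says nothing about the behaviour of $g$ on tuples containing $0$. This is the main obstacle, and it forces a strengthening of the induction hypothesis so as to control $f$ on every ``zero-pattern'', not only on $A^k$.

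Concretely, for $S\subseteq\finset{k}$ and $\vec{a}\in A^k$ let $\vec{a}^{(S)}\in(A^\oplus)^k$ be the tuple whose $i$-th entry is $a_i$ if $i\in S$ and $0$ otherwise, and define $f^{(S)}\colon A^k\to A^\oplus$ by $f^{(S)}(\vec{a}):=f(\vec{a}^{(S)})$. I would prove the strengthened invariant
\[
\text{for every } S\subseteq\finset{k}:\quad f^{(S)}=0 \ \text{ or }\ f^{(S)}\in\funset{C},
\]
which specialises to \eqref{eq:fprop} by taking $S=\finset{k}$, since $f^{(\finset{k})}=f\restrict{A^k}$. Checking the invariant on the generators is routine: $f^\oplus$ gives $0$ on every $S\neq\finset{k}$ and equals $f$ for $S=\finset{k}$; a projection $\pi_j^{(k)}$ gives $\pi_j^{(k)}$ when $j\in S$ and $0$ otherwise; the unary constant with value $a\in A$ gives the ($k$-ary) constant $a$, which lies in $\funset{C}$ because $\funset{C}$ is constantive, while the unary constant with value $0$ gives $0$. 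The only slightly delicate generator is $\cdot$: here $S=\finset{2}$ yields $0$, the two singletons yield the two projections, and $S=\emptyset$ yields the constant $1$; note that this last case uses constantiveness to place the constant $1$ in $\funset{C}$.

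For the inductive step, fix $S\subseteq\finset{k}$ and write, for $\vec{a}\in A^k$,
\[
f^{(S)}(\vec{a})=g\bigl(h_1^{(S)}(\vec{a}),\dots,h_m^{(S)}(\vec{a})\bigr).
\]
Let $T:=\{j\mid h_j^{(S)}\neq 0\}$; by the induction hypothesis $h_j^{(S)}\in\funset{C}$ (hence takes values in $A$) for $j\in T$, and $h_j^{(S)}=0$ for $j\notin T$. Thus for every $\vec{a}\in A^k$ the inner tuple has the zero-pattern $T$, so the outer evaluation equals $g^{(T)}$ applied to it. Applying the induction hypothesis to $g$ with the face $T$ then gives two cases: if $g^{(T)}=0$, then $f^{(S)}=0$; if $g^{(T)}\in\funset{C}$, then, since $g^{(T)}$ ignores the coordinates outside $T$, the function $f^{(S)}$ is the superposition of the $\funset{C}$-operations $g^{(T)}$ and $(h_j^{(S)})_{j\in T}$, where the unused inputs of $g^{(T)}$ are filled with any $\funset{C}$-operation (e.g.\ a constant), whence $f^{(S)}\in\funset{C}$. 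This closes the induction.

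Taking $S=\finset{k}$ then yields \eqref{eq:fprop}. I expect the crux of the write-up to be precisely the formulation of this face invariant and the bookkeeping of the zero-pattern $T$ in the composition step; the verification on generators and the final specialisation are routine. It is worth remarking that the argument uses only that $\funset{C}$ is a constantive clone, not the hypotheses $\card{A}\geq 4$ or the existence of a Mal'cev term.
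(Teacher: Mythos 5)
Your proof is correct, but it follows a genuinely different route from the paper's. Both arguments are inductions over the generation of $\funset{C}^\oplus$, and they diverge exactly at the composition step. You observe, correctly, that the bare invariant \eqref{eq:fprop} is not preserved under a composition $g(h_1,\dots,h_m)$ when the outer function $g$ is an \emph{arbitrary} member of the clone, since \eqref{eq:fprop} says nothing about $g$ on tuples containing the new element $0$; you repair this by strengthening the induction hypothesis to your ``face'' invariant (controlling $f^{(S)}$ for every $S\subseteq\finset{k}$) and tracking the zero-pattern $T$ of the inner functions. The paper avoids the strengthening altogether by organising the induction so that the outer operation is always a \emph{generator} --- one of the $f^\oplus$, the operation $\cdot$, or a unary constant --- whose values on tuples containing $0$ are known explicitly from the definitions; it then verifies \eqref{eq:fprop} for $s(t_1,\dots,t_n)$ by a short case analysis on which of the $t_j\restrict{A^k}$ vanish. (This suffices because the clone generated by a set $G$ is the smallest set containing the projections and closed under $(t_1,\dots,t_n)\mapsto s(t_1,\dots,t_n)$ with $s\in G$.) Your version costs more bookkeeping but proves a stronger statement, namely the behaviour of every $f\in\funset{C}^\oplus$ on every zero-pattern rather than only on $A^k$, and it would carry over to other generating sets once those are checked against the face invariant; the paper's version is shorter precisely because the generators' behaviour off $A^k$ is hard-wired into their definitions. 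Both arguments use only constantivity of $\funset{C}$ (needed for the constant $1$ arising from $0\cdot 0$ and for the unary constants), and neither uses $\card{A}\geq 4$ or the Mal'cev term.
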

\begin{proof}
Let $K$ be the set of all unary constant functions on $A^\oplus$.
We will show that all projections on $A^\oplus$ satisfy~\eqref{eq:fprop}, and that for all $k,n \in \N$, for all $n$-ary \[ s \in \{f^\oplus \mid f\in \funset{C}\}\cup \{\cdot\} \cup K\] and for all $k$-ary $t_1, \ldots, t_n$ that satisfy~\eqref{eq:fprop}, also $s (t_1, \ldots, t_n)$ satisfies~\eqref{eq:fprop}. From this, we conclude that every function in $\funset{C}^\oplus$ satisfies~\eqref{eq:fprop}.
  
Clearly, all the projections satisfy \eqref{eq:fprop}. 

Let $k, n \in \N$, let $t_1, \ldots, t_n$ be $k$-ary functions on $A^{\oplus}$ that satisfy~\eqref{eq:fprop}, and let $s$ be an $n$-ary function on $A^\oplus$ from
$\{f^\oplus \mid f\in \funset{C}\}\cup \{\cdot\} \cup K$.
We will show that $s(t_1,\ldots, t_n)$ satisfies~\eqref{eq:fprop}.

We first consider the case that $s = f^\oplus$ with  $f\in\funset{C}\arii{n}$.
If there exists $i \in\finset{n}$ such that $t_i\restrict{A^k}=0$, then by the definition
of $f^\oplus$, $f^\oplus(t_1,\dots,t_n)\restrict{A^k}=0$, and hence $s(t_1, \ldots, t_n)$
satisfies~\eqref{eq:fprop}.
If for all $i\in\finset{n}$, there exists $\hat{t_i} \in \funset{C}\arii{k}$ such that $t_i\restrict{A^k}=\hat{t_i}$, then by the definition of $f^\oplus$ we have
\[s (t_1, \ldots, t_n)\restrict{A^k} =
f^\oplus(t_1,\dots,t_n)\restrict{A^k}=f(\hat{g_1},\dots, \hat{g_n})\in \funset{C}.\] 
Therefore $s(t_1, \ldots, t_n)$ satisfies~\eqref{eq:fprop}.

Now we consider the case that $s$ is the function $\cdot$ (and $n = 2$).
Let  $t_1,t_2$ be $k$-ary functions on $A^\oplus$ satisfying~\eqref{eq:fprop}.
We show that $t_1 \cdot t_2$ satisfies~\eqref{eq:fprop}.
Since $t_1, t_2$ satisfy~\eqref{eq:fprop}, we have to consider the following cases.  

\textbf{Case 1}: $t_1\restrict{A^k}= t_2\restrict{A^k}=0$:  In this case, the definition of $\cdot$ yields
$t_1 \cdot t_2 \restrict{A^k}= 1$. Since $\funset{C}$ is constantive, $t_1 \cdot t_2\restrict{A^k} \in \funset{C}$, and thus $t_1 \cdot t_2$ satisfies~\eqref{eq:fprop}.

\textbf{Case 2}: \emph{$t_1\restrict{A^k}= 0$ and there exists $\hat{t_2}\in\funset{C}\arii{k}$ such that $t_2\restrict{A^k}=\hat{t_2}$}: In this case, the definition of $\cdot$ yields
$t_1 \cdot t_2 \restrict{A^k}= \hat{t_2}$. This implies that $t_1 \cdot t_2\restrict{A^k} \in \funset{C}$, and thus $t_1 \cdot t_2$ satisfies~\eqref{eq:fprop}.

\textbf{Case 3}: \emph{$t_2\restrict{A^k}=0$ and there exists $\hat{t_1}\in\funset{C}\arii{k}$ such that $t_1\restrict{A^k}=\hat{t_1}$}: In this case, the definition of $\cdot$ yields
$t_1 \cdot t_2 \restrict{A^k}= \hat{t_1}$. This implies that $t_1 \cdot t_2\restrict{A^k} \in \funset{C}$, and thus $t_1 \cdot t_2$ satisfies~\eqref{eq:fprop}.

\textbf{Case 4}: \emph{There exist $\hat{g_1},\hat{g_2}\in \funset{C}\arii{k}$ such that $g_1\restrict{A^k}=\hat{g_1}$ and $g_2\restrict{A^k}=\hat{g_2}$}: In this case, the definition of $\cdot$ yields
$t_1 \cdot t_2 \restrict{A^k}= 0$, thus $t_1 \cdot t_2$ satisfies~\eqref{eq:fprop}.

Finally, let us consider the case $s\in K$ (and $n=1$). Let $a\in A^\oplus$ be the unique function value of $s$, and let $t$ be a $k$-ary function on $A^\oplus$ that satisfies \eqref{eq:fprop}. Clearly, $s(t)(x_1,\dots, x_k)=a$ for all $x_1,\dots, x_k\in A^\oplus$. If $a\in A$, since $\funset{C}$ is constantive, we have that $s(t)$ satisfies \eqref{eq:fprop}. If $a=0$, then $s(t)(x_1,\dots, x_k)=0$ for all $(x_1,\dots, x_k)\in A^k$. Thus, $s(t)$ satisfies \eqref{eq:fprop}. 
\end{proof}

\begin{lemma}\label{lemma:extending_restrictions}
Let $A$ be a finite set with $\card{A}\geq 4$, let $\funset{C}$ be a constantive clone on $A$, let $f\in(\funset{C}^\oplus)\arii{k}$, and let $\vec{a},\vec{b}\in A^{k}$. Then $f(\vec{a})=0$ if and only if $f(\vec{b})=0$.
\end{lemma}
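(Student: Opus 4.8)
The plan is to reduce the statement directly to the structural dichotomy provided by Lemma~\ref{lemma:on_shape_of_C_oplus}. Since $f\in(\funset{C}^\oplus)\arii{k}$, that lemma tells us that one of two situations occurs: either $f\restrict{A^k}$ is the constant zero function, or there is some $\hat{f}\in\funset{C}$ with $f\restrict{A^k}=\hat{f}$. In either case I would argue that the value $f(\vec{x})$ has the same ``is zero or not'' status for \emph{every} $\vec{x}\in A^k$, which immediately yields the claimed equivalence for the particular points $\vec{a}$ and $\vec{b}$.

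First I would dispose of the case $f\restrict{A^k}=0$. Here $f(\vec{x})=0$ for all $\vec{x}\in A^k$, so in particular $f(\vec{a})=0$ and $f(\vec{b})=0$; both sides of the biconditional are true and the equivalence holds.

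Next I would treat the case $f\restrict{A^k}=\hat{f}$ with $\hat{f}\in\funset{C}$. The essential observation is that $\hat{f}$ is an operation on $A$, hence takes its values in $A$, and that $0\notin A$ by the construction of $A^\oplus$. Consequently $f(\vec{x})=\hat{f}(\vec{x})\in A\setminus\{0\}$ for every $\vec{x}\in A^k$; in particular $f(\vec{a})\neq 0$ and $f(\vec{b})\neq 0$, so both sides of the biconditional are false and the equivalence again holds.

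Since these two cases are exhaustive by Lemma~\ref{lemma:on_shape_of_C_oplus}, the lemma follows. The only point requiring any care---and the place I would expect a reader to want a sentence of justification---is the harmless but crucial fact that $0\notin A$, which forces the range of $\hat{f}$ to avoid $0$; everything else is a matter of reading off the dichotomy. I anticipate no combinatorial obstacle here, the substantive work having already been carried out in establishing Lemma~\ref{lemma:on_shape_of_C_oplus}.
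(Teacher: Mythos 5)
Your proposal is correct and is essentially the paper's own argument: both proofs reduce the claim to the dichotomy of Lemma~\ref{lemma:on_shape_of_C_oplus} and the fact that $\hat{f}$, being an operation on $A$, never takes the value $0\notin A$. The paper merely phrases it contrapositively ($f(\vec{a})\neq 0$ forces the second alternative, hence $f(\vec{b})\in A$) rather than as an explicit two-case analysis.
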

\begin{proof}
If $f(\vec{a})\neq 0$, then $f\restrict{A^{k}}\neq  0$. Thus by Lemma \ref{lemma:on_shape_of_C_oplus}, there exists $\hat{f}\in\funset{C}$ such that $f\restrict{A^{k}}= \hat{f}$, and therefore $f(\vec{b})=\hat{f}(\vec{b})\in A$, which implies $f(\vec{b})\neq 0$. 
\end{proof}

\begin{lemma}{\cite[Section 3]{Idz99}}\label{lemma:result_from_Idziak99}
Let $A$ be a finite set with $\card{A}\geq 4$ and let $\funset{C}$ be a constantive Mal'cev clone on $A$. Then $\funset{C}^\oplus$ is a constantive Mal'cev clone on a set of cardinality $\card{A} + 1$. 
\end{lemma}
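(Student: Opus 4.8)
The requirements that $\funset{C}^\oplus$ be constantive and live on a set of cardinality $\card{A}+1$ are immediate from the definition of the construction: every unary constant on $A^\oplus$ is among the generators, and $A^\oplus=A\cup\{0\}$ has $\card{A}+1$ elements. So the whole task is to exhibit a ternary term $d\in\funset{C}^\oplus$ with $d(x,y,y)=d(y,y,x)=x$ for all $x,y\in A^\oplus$, and my plan is to build it explicitly from the generators $p^\oplus$ (where $p$ is a Mal'cev term of $\funset{C}$), the operation $\cdot$, and the constants.

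First I would isolate three gadgets, each a term over the generators. The \emph{zero-test} $\sigma(x):=x\cdot x$ satisfies $\sigma(x)=1$ if $x=0$ and $\sigma(x)=0$ if $x\in A$; the \emph{retraction} $\rho(x):=x\cdot 0$ fixes $A$ pointwise and sends $0\mapsto 1$; and, crucially, $\cdot$ restricted to $\{0,1\}$ is the NAND operation ($1\cdot1=0$, while $u\cdot v=1$ otherwise), so every Boolean function of the values $\sigma(x),\sigma(y),\sigma(z)\in\{0,1\}$ is again a term. I would also record the \emph{conditional-kill} identity: for $s\in\{0,1\}$ and $w\in A$ one has $s\cdot w=0$ if $s=1$ and $s\cdot w=w$ if $s=0$.

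With these in hand, put $D(x,y,z):=p^\oplus(\rho(x),\rho(y),\rho(z))$. Since $\rho$ maps into $A$ and $p$ is a Mal'cev operation of $\funset{C}$ on $A$, the value $D(x,y,z)$ always lies in $A$ and satisfies $D(x,y,y)=\rho(x)=D(y,y,x)$; thus $D$ already returns the correct Mal'cev value whenever that value should lie in $A$, and returns $\rho(0)=1$ exactly in the cases where the answer ought to be $0$. I would then set
\[
d(x,y,z):=K(x,y,z)\cdot D(x,y,z),
\]
where $K$ is the Boolean term in $\sigma(x),\sigma(y),\sigma(z)$ taking value $1$ on the zero-patterns forced to output $0$ (that is $x=y=z=0$, or $x=0$ with $y,z\in A$, or $x,y\in A$ with $z=0$) and value $0$ on the patterns that must pass $D$ through (all of $x,y,z$ in $A$, or $x\in A$ with $y=z=0$, or $x=y=0$ with $z\in A$); on the two remaining patterns $K$ may be chosen arbitrarily. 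By the conditional-kill identity, $d=0$ when $K=1$ and $d=D$ when $K=0$. Verifying $d(x,y,y)=d(y,y,x)=x$ then reduces to a short case check according to whether each of $x,y$ equals $0$ or lies in $A$, using only the Mal'cev identities $p(a,b,b)=p(b,b,a)=a$ on $A$ (with $b=1$ in the mixed cases); the two free patterns of $K$ never arise as a triple of the form $(x,y,y)$ or $(y,y,x)$, so they are irrelevant.

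The single genuine obstacle is that $\cdot$, the natural tool for gluing the ``interior'' function $p^\oplus$ to a boundary correction, sends $(0,0)\mapsto 1$ rather than $0$; hence a naive product such as $p^\oplus(x,y,z)\cdot(\text{correction})$ produces $1$ at precisely the all-zero patterns where the Mal'cev output must be $0$. The device that circumvents this is exactly the pair consisting of the retraction $\rho$ and the Boolean completeness of $\cdot$ on $\{0,1\}$: composing $p^\oplus$ with $\rho$ keeps the data argument of the final product inside $A$, so the product never degenerates to $0\cdot 0$, while NAND-completeness lets us compute the kill-predicate $K$ that reinserts the value $0$ exactly where it is needed. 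I note that producing $d$ does not actually use $\card{A}\ge 4$; that bound is inherited from the surrounding construction.
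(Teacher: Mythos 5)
Your proposal is correct, but note that the paper itself offers no proof of this lemma: it is imported wholesale from Idziak's paper via the citation \cite[Section 3]{Idz99}, so there is no internal argument to compare against. What you have done is supply a self-contained verification, and it holds up. The constantivity and cardinality claims are indeed immediate from the generators of $\funset{C}^\oplus$. For the Mal'cev term, your three gadgets are computed correctly from the definition of $\cdot$: $\sigma(x)=x\cdot x$ is the indicator of $x=0$, $\rho(x)=x\cdot 0$ retracts $A^\oplus$ onto $A$ with $\rho(0)=1$, the restriction of $\cdot$ to $\{0,1\}$ is NAND (hence Sheffer, so every Boolean function of $\sigma(x),\sigma(y),\sigma(z)$ is a term), and $s\cdot w$ for $s\in\{0,1\}$, $w\in A$ kills $w$ exactly when $s=1$. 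Since $\rho$ maps into $A$, the function $D=p^\oplus(\rho(x),\rho(y),\rho(z))$ always takes values in $A$ and satisfies $D(x,y,y)=D(y,y,x)=\rho(x)$, which is the desired output except that it returns $1$ instead of $0$ when $x=0$; your kill-predicate $K$ repairs exactly those patterns, the six constrained zero-patterns receive consistent values, and the two unconstrained patterns indeed never arise from triples of the form $(x,y,y)$ or $(y,y,x)$. An eight-case check (four per identity) confirms $d(x,y,y)=d(y,y,x)=x$ on all of $A^\oplus$. Your closing remark is also accurate: the hypothesis $\card{A}\geq 4$ plays no role in producing the Mal'cev term and is only needed elsewhere in Idziak's construction. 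The one point worth flagging is cosmetic rather than mathematical: since the lemma is attributed to Idziak, a reader would expect either the citation alone or a remark that the proof is being supplied for completeness; as written, your argument is a legitimate replacement for the external reference.
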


\begin{proposition}\label{prop:algebraic_equivalence_of_C_oplus}
Let $A$ be a finite set with $\card{A}\geq 4$, and let $\funset{C}_1,\funset{C}_2$ be two constantive Mal'cev clones on $A$. If $\funset{C}_1$ and $\funset{C}_2$ are not algebraically equivalent, then $\funset{C}_1^\oplus$ and $\funset{C}_2^\oplus$ are not algebraically equivalent.
\end{proposition}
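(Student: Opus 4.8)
The plan is to argue directly: starting from a single algebraic set that separates $\funset{C}_1$ from $\funset{C}_2$, I will build an algebraic set that separates $\funset{C}_1^\oplus$ from $\funset{C}_2^\oplus$. The bridge between the two geometries is the map $B \mapsto B \cap A^k$ that sends a $k$-ary algebraic set of $\funset{C}^\oplus$ to its trace on $A^k$. The core of the proof is to show that this map lands inside $\Alg\ari{k}\funset{C}$ (a \emph{restriction property}) and that every element of $\Alg\ari{k}\funset{C}$ is hit by lifting its defining equations through the $\oplus$-construction (a \emph{lifting property}).

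First I would prove the restriction property: for every $k \in \N$ and every $B \in \Alg\ari{k}\funset{C}^\oplus$ one has $B \cap A^k \in \Alg\ari{k}\funset{C}$. Write $B$ as the solution set of a (possibly infinite) system $\{f_i \approx g_i \mid i \in I\}$ with $f_i, g_i \in (\funset{C}^\oplus)\arii{k}$, so that $B \cap A^k = \bigcap_{i \in I} S_i$ where $S_i := \{\vec{x} \in A^k \mid f_i(\vec{x}) = g_i(\vec{x})\}$. By Lemma~\ref{lemma:on_shape_of_C_oplus}, each of $f_i\restrict{A^k}$ and $g_i\restrict{A^k}$ is either the constant $0$ or a function in $\funset{C}$ (whose values lie in $A$). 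The case analysis is then immediate: if both sides restrict to $0$ then $S_i = A^k$; if exactly one side restricts to $0$ then the equation reads $0 = \hat{g_i}(\vec{x})$ with $\hat{g_i}(\vec{x}) \in A$, which has no solution since $0 \notin A$, so $S_i = \emptyset$; and if both sides restrict to functions $\hat{f_i}, \hat{g_i} \in \funset{C}$ then $S_i$ is the solution set of the $\funset{C}$-equation $\hat{f_i} \approx \hat{g_i}$. Each of $A^k$, $\emptyset$, and such a solution set belongs to $\Alg\ari{k}\funset{C}$ (the empty set because $\funset{C}$ is constantive, hence cut out by $c_0 \approx c_1$ for distinct constants $c_0, c_1$), and $\Alg\ari{k}\funset{C}$ is closed under intersection, so $B \cap A^k \in \Alg\ari{k}\funset{C}$.

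Next is the lifting property, which is routine: given $D \in \Alg\ari{k}\funset{C}$ defined by $\{\hat{f_i} \approx \hat{g_i} \mid i \in I\}$ with $\hat{f_i}, \hat{g_i} \in \funset{C}\arii{k}$, let $B$ be the solution set of $\{\hat{f_i}^\oplus \approx \hat{g_i}^\oplus \mid i \in I\}$. Then $B \in \Alg\ari{k}\funset{C}^\oplus$, and since $\hat{f_i}^\oplus\restrict{A^k} = \hat{f_i}$ and $\hat{g_i}^\oplus\restrict{A^k} = \hat{g_i}$ by the definition of $f^\oplus$, we get $B \cap A^k = D$. Now I assemble the pieces: assuming $\funset{C}_1$ and $\funset{C}_2$ are not algebraically equivalent, choose $k$ and, without loss of generality, $D \in \Alg\ari{k}\funset{C}_1 \setminus \Alg\ari{k}\funset{C}_2$. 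Lift $D$ to $B \in \Alg\ari{k}\funset{C}_1^\oplus$ with $B \cap A^k = D$. Were $B$ also in $\Alg\ari{k}\funset{C}_2^\oplus$, the restriction property applied to $\funset{C}_2$ would force $D = B \cap A^k \in \Alg\ari{k}\funset{C}_2$, a contradiction; hence $B$ witnesses that $\funset{C}_1^\oplus$ and $\funset{C}_2^\oplus$ are not algebraically equivalent.

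I expect the restriction property to be the only real obstacle, and within it the decisive point is the observation that a $\funset{C}^\oplus$-equation with one side restricting to $0$ and the other to a genuine $\funset{C}$-function has no solution on $A^k$, precisely because $0 \notin A$. This is what keeps the lifted sets from gaining spurious solutions over $A^k$ and makes $B \mapsto B \cap A^k$ a faithful correspondence; everything else is bookkeeping built on Lemma~\ref{lemma:on_shape_of_C_oplus}.
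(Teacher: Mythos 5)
Your proof is correct, and it reaches the paper's conclusion by a genuinely different organization of the argument. The paper works with the set $B_0=B\cup((A^\oplus)^k\setminus A^k)$ --- which is exactly your lifted set --- but argues pointwise: it fixes a witness $\vec{a}\in\algclo{\funset{C}_2}{B}\setminus B$ and applies Lemma~\ref{lemma:basic_characterization_algebraic_closure} twice, once to show $B_0\in\Alg\funset{C}_1^\oplus$ and once to show $\vec{a}\in\algclo{\funset{C}_2^\oplus}{B_0}$; the second step uses Lemma~\ref{lemma:extending_restrictions} together with the nonemptiness of $B$ (from constantivity of $\funset{C}_2$) to rule out that the separating functions restrict to the constant $0$ on $B$. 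You instead prove the stronger structural statement that the trace map $B\mapsto B\cap A^k$ sends $\Alg\ari{k}\funset{C}^\oplus$ onto $\Alg\ari{k}\funset{C}$, working directly with defining systems of equations rather than with the closure operator. This bypasses Lemma~\ref{lemma:extending_restrictions} entirely: the role it plays in the paper is taken over in your argument by the observation that an equation whose two sides restrict over $A^k$ to $0$ and to a $\funset{C}$-function respectively has empty solution set because $0\notin A$ (constantivity is still needed, now to certify $\emptyset\in\Alg\ari{k}\funset{C}$). Both arguments rest on Lemma~\ref{lemma:on_shape_of_C_oplus} as the essential input. Your version buys a reusable, complete description of $\Alg\funset{C}^\oplus$ in terms of $\Alg\funset{C}$ at the cost of a slightly longer case analysis; the paper's version is more economical for the single separation it needs.
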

\begin{proof}
Without loss of generality, we assume that there exists $B\subseteq A^k$ that is algebraic with respect to $\funset{C}_1$ but not with respect to $\funset{C}_2$. Note that then $B\neq A^k$. Since $\funset{C}_2$ is constantive, $B$ is not empty. 
Then there exists $\vec{a}\in A^k\setminus B$ such that $\vec{a}\in\algclo{\funset{C}_2}{B}$.

Let $B_0:=B\cup ((A^\oplus)^k\setminus A^k)$. 
We claim that 
\begin{equation}\label{eq:B0inAlgC1oplus}
B_0\in\Alg\funset{C}_1^\oplus. 
\end{equation}
To prove \eqref{eq:B0inAlgC1oplus}, let $\vec{c}\notin B_0$.  By Lemma~\ref{lemma:basic_characterization_algebraic_closure} it suffices to show that there are two functions $p_1,p_2\in \funset{C}_1^\oplus$ such that $p_1\restrict{B_0}= p_2\restrict{B_0}$ and $p_1(\vec{c})\neq p_2(\vec{c})$. Since $\vec{c}\in A^k\setminus B$ and $B$ is algebraic with respect to $\funset{C}_1$, Lemma~\ref{lemma:basic_characterization_algebraic_closure} implies that there are $f_1,f_2\in\funset{C}_1\arii{k}$ such that $f_1\restrict{B}= f_2\restrict{B}$ and $f_1(\vec{c})\neq f_2(\vec{c})$. We set $p_1=f_1^\oplus$ and $p_2=f_2^\oplus$. Then $p_1\restrict{B_0}= p_2\restrict{B_0}$ and $p_1(\vec{c})\neq p_2(\vec{c})$, which concludes the proof of \eqref{eq:B0inAlgC1oplus}. 

We now claim that
\begin{equation}\label{eq:B0isnotAlgC2oplus}
\vec{a}\in\algclo{\funset{C}_2^\oplus}{B_0}. 
\end{equation}
Suppose that $\vec{a}\notin \algclo{\funset{C}_2^\oplus}{B_0}$. Then Lemma~\ref{lemma:basic_characterization_algebraic_closure} implies that there are $f_1,f_2\in(\funset{C}_2^\oplus)\arii{k}$ such that $f_1\restrict{B_0}= f_2\restrict{B_0}$ and $f_1(\vec{a})\neq f_2(\vec{a})$. Since $B\subseteq B_0$ we infer that $f_1\restrict{B}= f_2\restrict{B}$. From Lemma \ref{lemma:extending_restrictions} and $B\neq \emptyset$, we deduce that $f_1\restrict{B}= f_2\restrict{B}\neq 0$. Moreover, by Lemma \ref{lemma:on_shape_of_C_oplus}, there are $\hat{f_1},\hat{f_2}\in \funset{C}_2\arii{k}$ such that $f_1\restrict{A^k}= \hat{f_1}$ and $f_2\restrict{A^k}= \hat{f_2}$. Therefore, $\hat{f_1}\restrict{B}=\hat{f_2}\restrict{B}$ and $\hat{f_1}(\vec{a})\neq \hat{f_2}(\vec{a})$. Hence by Lemma~\ref{lemma:basic_characterization_algebraic_closure}, $\vec{a}\notin \algclo{\funset{C}_2}{B}$, contradicting the choice of $\vec{a}$.

From \eqref{eq:B0inAlgC1oplus} and \eqref{eq:B0isnotAlgC2oplus} we obtain $\Alg\funset{C}_1^\oplus\neq \Alg\funset{C}_2^\oplus$.
\end{proof}

\begin{proof}[Proof of Theorem \ref{teor:infinitely_many_lgebraically_inquivalent_malcev_clones}]
We proceed by induction on $n=\card{A}$. 
For the base case $n=4$, we use Theorem~\ref{teor:inf_many_expansions_Zp2n} with $n=1$ and $p=2$. For the induction step we let $n\geq 4$ and $\{\funset{C}_i\mid i\in\N\}$ be a collection of pairwise algebraically inequivalent constantive Mal'cev clones on a set of cardinality $n$. Then Proposition~\ref{prop:algebraic_equivalence_of_C_oplus} and Lemma~\ref{lemma:result_from_Idziak99} imply that $\{\funset{C}_i^\oplus\mid i\in\N\}$ is a collection of pairwise algebraically inequivalent constantive Mal'cev clones on a set of cardinality $n+1$.
\end{proof}

\section{Continuously many algebraically inequivalent clones on sets with at least 3 elements}\label{sec:3elementset}
Let $Z=\{0,1,2\}$. For $n\in\N$ and for $i \le n$, a function $f : Z^n \to Z$ \emph{depends on its $i$-th argument} if there exist $\vec{a}\in Z^{n}$ and $b\in Z$ such that $f(\vec{a})\neq f(a_1,\dots, a_{i-1}, b, a_{i+1}, \dots , a_{n})$. If $f$ depends on exactly one of its arguments, then $f$ is called \emph{essentially unary}.
The \textit{essential arity} of $f$ is defined by $\essenar{f}:=\card{\{i\in\finset{n}: f\text{ depends on its }i\text{-th argument}\}}$.
Let $\funset{F}$ be a set of finitary functions on $Z$. 
We say that \emph{$f$ belongs to $\funset{F}$ up to inessential arguments} if there exist $l\in\N$, $1\leq i_1<\dots< i_l\leq n$ and a function $g\in\funset{F}\arii{l}$ such that $f(x_1,\dots, x_n)=g(x_{i_1},\dots, x_{i_l})$ for all $\vec{x}\in Z^n$. 
 
Let $D_1=\emptyset$ and for each $m\in\N\setminus\{1\}$, let $D_m:=\{(1,2,\dots, 2),\dots, (2,\dots, 2,1)\}$. 
For each $n\in \N$, let $F_n$ be the set consisting of all the functions $f\colon Z^n\to Z$ such that 
\begin{align}\label{eq:referee_a}
&f(Z^n)\subseteq \{0,1\}, \text{ and}\\ 
\label{eq:referee_b}
&f^{-1}(\{1\})\subseteq D_n. 
\end{align}
We set $F:=\bigcup_{n\in\N}F_n$ and 
\[
F':=\bigcup_{n\in\N}\{f\colon Z^n\to Z\mid f \text{ belongs to }F\text{ up to inessential arguments}\}\cup \projections[Z].
\]
\begin{lemma}\label{lemma:the_shape_of_F'}
The sets $F$ and $F'$ satisfy the following properties:
\begin{enumerate}
\item For all $f\in F$ with $\essenar{f}\leq 1$, we have $f= 0$.\label{eq:referee_c_originale}
\item For all $n\in\N$ and for all $f\in F\ari{n}$ with $\essenar{f}>0$, we have that
  $\essenar{f}=n$ and $n\geq 2$.\label{eq:referee_e}
\item For all $f\in F'$ with $\essenar{f}=1$, we have $f\in \projections[Z]$.\label{eq:no_essential_arity_one}
\item For all $l,n\in\N$, for all $g\in F_l$ and for all $\rho\colon\finset{l}\to\finset{n}$, we have $g_\rho\in F'_n$.\label{eq:minors_of_F_are_in_F'}
\item For all $k,n\in\N$, for all $f\in F'_k$ and for all $\sigma\colon\finset{k}\to\finset{n}$, we have $f_\sigma\in F'_n$. \label{eq:F'closed_under_minors}
\end{enumerate}
\end{lemma}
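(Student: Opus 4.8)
The plan is to extract one structural fact about $F$ that yields both \eqref{eq:referee_c_originale} and \eqref{eq:referee_e}, to derive \eqref{eq:no_essential_arity_one} from \eqref{eq:referee_c_originale}, and to obtain \eqref{eq:F'closed_under_minors} from \eqref{eq:minors_of_F_are_in_F'}, which is the real content. The fact I would prove first is: \emph{every nonzero $f\in F_n$ depends on all $n$ of its arguments, and its existence forces $n\ge 2$}. Indeed, since $f(Z^n)\subseteq\{0,1\}$ and $f\neq 0$, there is $\vec{d}$ with $f(\vec{d})=1$; then $\vec{d}\in f^{-1}(\{1\})\subseteq D_n$, so $D_n\neq\emptyset$ and hence $n\ge 2$, and $\vec{d}$ has a single coordinate equal to $1$, say at position $k$, all others equal to $2$. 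To see that $f$ depends on an arbitrary position $i$, I would perturb $\vec{d}$ in that coordinate: if $i=k$ replace the $1$ by $0$, and if $i\neq k$ replace the $2$ by $1$. In both cases the new tuple leaves $D_n$ (it acquires a $0$, respectively a second $1$), so by \eqref{eq:referee_b} its value is $0\neq 1=f(\vec{d})$, witnessing dependence on coordinate $i$. Now \eqref{eq:referee_e} is immediate, and \eqref{eq:referee_c_originale} is the contrapositive: a nonzero $f\in F$ has $\essenar{f}=n\ge 2$, so $\essenar{f}\le 1$ forces $f=0$.

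For \eqref{eq:no_essential_arity_one}, I would take $f\in F'$ with $\essenar{f}=1$. By the definition of $F'$, either $f\in\projections[Z]$ and we are done, or $f$ belongs to $F$ up to inessential arguments, say $f(\vec{x})=g(x_{i_1},\dots,x_{i_l})$ with $g\in F_l$. Passing to inessential arguments does not change the essential arity, so $\essenar{g}=\essenar{f}=1$; but then \eqref{eq:referee_c_originale} gives $g=0$ and thus $\essenar{g}=0$, a contradiction. Hence the second alternative is impossible and $f\in\projections[Z]$.

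The heart of the argument is \eqref{eq:minors_of_F_are_in_F'}, which I would prove by factoring $\rho$ through its image. Put $R:=\rho(\finset{l})=\{c_1<\dots<c_{l'}\}$ with $l'=\card{R}$, let $\iota\colon\finset{l'}\to\finset{n}$ be the increasing map with image $R$, and let $\rho'\colon\finset{l}\to\finset{l'}$ be the corestriction of $\rho$, so that $\rho=\iota\circ\rho'$, the map $\rho'$ is onto, and $g_\rho(\vec{x})=(g_{\rho'})_\iota(\vec{x})=g_{\rho'}(x_{c_1},\dots,x_{c_{l'}})$. Since $\iota$ is an increasing injection, once $g_{\rho'}\in F_{l'}$ is established the last identity exhibits $g_\rho$ as belonging to $F$ up to inessential arguments, i.e. $g_\rho\in F'_n$. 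So it remains to treat the surjective case: if $\rho'\colon\finset{l}\to\finset{l'}$ is onto and $g\in F_l$, then $g_{\rho'}\in F_{l'}$. Property \eqref{eq:referee_a} is inherited, since $g_{\rho'}(Z^{l'})\subseteq g(Z^l)\subseteq\{0,1\}$. For \eqref{eq:referee_b}, suppose $g_{\rho'}(\vec{y})=1$; then $\vec{w}:=(y_{\rho'(1)},\dots,y_{\rho'(l)})\in g^{-1}(\{1\})\subseteq D_l$ has a unique coordinate equal to $1$, say $w_{k_0}=1$, and I set $t_0:=\rho'(k_0)$. As all coordinates of $\vec{w}$ lying in one fiber of $\rho'$ share a common value $y_t$, the fiber over $t_0$ must be $\{k_0\}$ (else $\vec{w}$ would carry a second $1$), while every other fiber carries the value $2$; hence $y_{t_0}=1$ and $y_t=2$ for all $t\neq t_0$, so $\vec{y}\in D_{l'}$. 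When $l'=1$ the same bookkeeping shows that $\vec{w}$ can never lie in $D_l$, so $g_{\rho'}$ is constantly $0\in F_1$. In all cases $g_{\rho'}^{-1}(\{1\})\subseteq D_{l'}$, as needed.

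Finally \eqref{eq:F'closed_under_minors} reduces to \eqref{eq:minors_of_F_are_in_F'}: if $f\in F'_k$ is a projection then $f_\sigma$ is again a projection, and if $f(\vec{x})=g(x_{i_1},\dots,x_{i_l})$ with $g\in F_l$, then $f_\sigma(\vec{x})=g(x_{\sigma(i_1)},\dots,x_{\sigma(i_l)})=g_\rho(\vec{x})$ for $\rho(t):=\sigma(i_t)$, whence $f_\sigma\in F'_n$ by \eqref{eq:minors_of_F_are_in_F'}. The step I expect to be the genuine obstacle is exactly the surjective case of \eqref{eq:minors_of_F_are_in_F'}: identifying variables of a member of $F$ could in principle destroy membership, and the content of the fiber bookkeeping is precisely that the only identifications compatible with retaining the value $1$ merge coordinates that were all equal to $2$, thereby preserving the ``single $1$, all other entries $2$'' shape that defines $D$.
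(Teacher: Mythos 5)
Your proposal is correct and follows essentially the same route as the paper: the structural fact that a nonzero member of $F_n$ depends on all $n$ arguments, the reduction of \eqref{eq:no_essential_arity_one} to \eqref{eq:referee_c_originale}, and the factorization of $\rho$ through its image (your surjective-case fiber bookkeeping is exactly the paper's verification that the induced function $h$ on the image coordinates lies in $F_k$). The only cosmetic differences are that you merge \eqref{eq:referee_c_originale} and \eqref{eq:referee_e} into one statement and that you explicitly handle the projection case in \eqref{eq:F'closed_under_minors}, which the paper leaves implicit.
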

\begin{proof}
\eqref{eq:referee_c_originale} 
Let $n\in\N$, let $f\in F_n$ with $\essenar{f}\leq 1$ and let $\vec{x}\in Z^n$. We prove that $f(\vec{x})=0$. 
If $\essenar{f}\leq 1$, then there exists $i\in \finset{n}$ such that for all $j\in\finset{n}\setminus\{i\}$ the function $f$ does not depend on its $j$-th argument. 
Let us define $f'\in Z^Z$ by $f'(y)=f(y,\dots, y)$ for all $y\in Z$. We prove that $f'=0$. To this end, let $a\in Z$. Since $(a,\dots, a)\notin D_n$, we have $f'(a)=f(a,\dots, a)=0$. Moreover, since for all $j\neq i$, $f$ does not depend on its $j$-th argument, $f(\vec{x})=f(x_i,\dots, x_i)=f'(x_i)=0$. This concludes the proof of \eqref{eq:referee_c_originale}. 

\eqref{eq:referee_e}
Let $n\in\N$ and let $f\in F\ari{n}$. If $f$ is not constantly zero, then there exists $\vec{a}\in D_n$ such that $f(\vec{a})=1$. Changing any one of the coordinates of $\vec{a}$ to $0$, the value of $f$ changes from $1$ to $0$. Hence $f$ depends on all of its arguments and \eqref{eq:referee_c_originale} implies that it cannot be essentially unary. This concludes the proof of \eqref{eq:referee_e}.

\eqref{eq:no_essential_arity_one}
Let $f\in F'_n$ with $\essenar{f}=1$. Seeking a contradiction, we suppose that $f$ is not a projection.  Then $f$ belongs to $F$ up to inessential arguments. Thus, there exist $1\leq j_1<\dots<j_k\leq n$ and $\hat{f}\in F$ such that $f(x_1,\dots, x_n)=\hat{f}(x_{j_1},\dots, x_{j_k})$ for all $\vec{x}\in Z^n$. If $\essenar{\hat{f}}\leq 1$, then \eqref{eq:referee_c_originale} yields $\hat{f}=0$, and so $f$ is the constant $0$-function, contradicting $\essenar{f}=1$. If $\essenar{\hat{f}}>1$, then $\essenar{f}>1$, contradicting $\essenar{f}=1$. This concludes the proof of \eqref{eq:no_essential_arity_one}.

\eqref{eq:minors_of_F_are_in_F'}
Let us assume that the image of $\rho$ is $\{j_1,\dots, j_k\}\subseteq \finset{n}$, with $j_1<\dots <j_k$, and let us define
\[
h:=\Set{\bigl( (x_{j_1},\dots, x_{j_k}),g(x_{\rho(1)},\dots, x_{\rho(l)})\bigr)\mid (x_1,\dots, x_n)\in Z^n}.
\]
We first prove that $h$ is a functional relation. To this end, let $\vec{x},\vec{y}\in Z^n$ be such that $(x_{j_1},\dots, x_{j_k})=(y_{j_1},\dots, y_{j_k})$. We show that $g(x_{\rho(1)},\dots, x_{\rho(l)})=g(y_{\rho(1)},\dots, y_{\rho(l)})$. Clearly, if $(x_{j_1},\dots, x_{j_k})=(y_{j_1},\dots, y_{j_k})$, then $(x_{\rho(1)}, \ldots, x_{\rho(l)}) = (y_{\rho(1)}, \ldots, y_{\rho(l)})$,
and therefore $g(x_{\rho(1)},\dots, x_{\rho(l)}) = g(y_{\rho(1)},\dots, y_{\rho(l)})$.
This concludes the proof that $h$ is a functional relation. 
Now we prove that $h\in F_k$. Condition \eqref{eq:referee_a} is clearly satisfied since $g\in F_l$. We prove that $h$ satisfies \eqref{eq:referee_b}.
To this end, let $\vec{x}\in A^n$ with $g(x_{\rho(1)},\dots, x_{\rho(l)})=1$. We show that $(x_{j_1},\dots, x_{j_k})\in D_k$. Since $g\in F_l$ and $g(x_{\rho(1)},\dots, x_{\rho(l)})=1$, there exists $a\in \finset{l}$ such that $x_{\rho(a)}=1$ and
$x_{\rho(b)}=2$ for all $b\in\finset{l}\setminus \{a\}$.
Since $j_1<\dots<j_k$ we have $\card{\{r\in\finset{k}\colon j_r=\rho(a)\}}\leq 1$. Moreover, since $\{j_1,\dots, j_k\}$ is the image of $\rho$, we have $\card{\{r\in\finset{k}\colon j_r=\rho(a)\}}\geq 1$. Thus, $\card{\{r\in\finset{k}\colon j_r=\rho(a)\}}=1$. Hence there exists $i\in\finset{k}$ such that $x_{j_i}=x_{\rho(a)}=1$. Then for each $r\in\finset{k}\setminus\{i\}$ there exists $b\in \finset{l}\setminus\{a\}$ such that $j_r=\rho(b)$, and then $x_{j_r}=x_{\rho(b)}=2$. Thus, $(x_{j_1},\dots, x_{j_k})\in D_k$, and so $h$ satisfies \eqref{eq:referee_b}. This proves that $h\in F_k$. Since $j_1<\dots< j_k$ and $g_\rho(\vec{a})=h(a_{j_1}, \dots, a_{j_k})$ for all $\vec{a}\in A^n$, $g_\rho$ belongs to $F$ up to inessential arguments. Hence $g_\rho \in F_n'$. 
This concludes the proof of \eqref{eq:minors_of_F_are_in_F'}.

\eqref{eq:F'closed_under_minors}
Let $k,n\in\N$, let $f\in F'_k$ and let $\sigma\colon\finset{k}\to\finset{n}$. We prove that $f_\sigma\in F'_n$. 
By the definition of $F'$, there exist $l\in\N$, $g\in F_l$ and $\tau\colon\finset{l}\to\finset{k}$ injective and increasing such that $f(y_1,\dots, y_k)=g(y_{\tau(1)},\dots, y_{\tau(l)})$ for all $\vec{y}\in Z^k$. Therefore, for all $\vec{x}\in Z^n$ we have
\[
f_\sigma(x_1,\dots, x_n)=f(x_{\sigma(1)},\dots, x_{\sigma(k)})=g(x_{\sigma(\tau(1))},\dots, x_{\sigma(\tau(l))}).
\]
Since $g\in F_l$, \eqref{eq:minors_of_F_are_in_F'} implies that $g_{\sigma\circ\tau}\in F'_n$, and thus $f_\sigma\in F'_n$.
This concludes the proof of \eqref{eq:F'closed_under_minors}.
\end{proof}

\begin{lemma}\label{lemma:F'_is_a_clone}
The set $F'$ is a clone. 
\end{lemma}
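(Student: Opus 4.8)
The plan is to check the two defining properties of a clone on $Z$. That $F'$ contains every projection is immediate from the definition $F'=\bigcup_{n}\{f\mid f\text{ belongs to }F\text{ up to inessential arguments}\}\cup\projections[Z]$, so the work is to show that $F'$ is closed under composition. Let $f\in F'$ be $n$-ary and $g_1,\dots,g_n\in F'$ be $m$-ary; I must show $f(g_1,\dots,g_n)\in F'$. If $f\in\projections[Z]$ then $f(g_1,\dots,g_n)=g_k$ for some $k$, and we are done, so assume $f\notin\projections[Z]$. Then $f$ belongs to $F$ up to inessential arguments, say $f(y_1,\dots,y_n)=\hat{f}(y_{\tau(1)},\dots,y_{\tau(l)})$ with $\hat{f}\in F_l$ and $\tau\colon\finset{l}\to\finset{n}$ increasing and injective, whence $f(g_1,\dots,g_n)=\hat{f}(g_{\tau(1)},\dots,g_{\tau(l)})$. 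It therefore suffices to prove $\hat{f}(g'_1,\dots,g'_l)\in F'$ whenever $\hat{f}\in F_l$ and $g'_1,\dots,g'_l\in F'$ are $m$-ary. By Lemma~\ref{lemma:the_shape_of_F'}\eqref{eq:referee_c_originale} and~\eqref{eq:referee_e}, either $\hat{f}$ is the constant zero function, and then so is the composite, or $\essenar{\hat{f}}=l$ and $l\geq 2$; I assume the latter.

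The crucial remark is that every element of $F'$ is either a projection or a $\{0,1\}$-valued function, since a non-projection member of $F'$ belongs to $F$ up to inessential arguments and hence takes values in $\{0,1\}$ by~\eqref{eq:referee_a}. Put $h:=\hat{f}(g'_1,\dots,g'_l)$; as $\hat{f}$ is $\{0,1\}$-valued, so is $h$, and $h(\vec{x})=1$ forces $(g'_1(\vec{x}),\dots,g'_l(\vec{x}))\in\hat{f}^{-1}(\{1\})\subseteq D_l$, i.e.\ exactly one coordinate of this tuple is $1$ and the remaining $l-1$ are $2$. I distinguish cases by the number of $t\in\finset{l}$ for which $g'_t$ is not a projection. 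If at least two of the $g'_t$ are non-projections, then at least two coordinates of the tuple lie in $\{0,1\}$ and can never be $2$; since $l\geq 2$ membership in $D_l$ is then impossible, so $h$ is constant zero and lies in $F'$. If every $g'_t$ is a projection, then $h$ is a minor of $\hat{f}\in F_l$ and Lemma~\ref{lemma:the_shape_of_F'}\eqref{eq:minors_of_F_are_in_F'} gives $h\in F'$.

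The remaining and principal case is that exactly one $g'_{t_0}$ is a non-projection, while $g'_t=\pi^{(m)}_{c_t}$ for $t\neq t_0$. Membership of the tuple in $D_l$ then forces the unique $1$ into coordinate $t_0$ and forces $x_{c_t}=2$ for all $t\neq t_0$. Writing $g'_{t_0}(\vec{x})=\hat{g}(x_{s_1},\dots,x_{s_p})$ with $\hat{g}\in F_p$, the requirement $g'_{t_0}(\vec{x})=1$ forces exactly one of $x_{s_1},\dots,x_{s_p}$ to be $1$ and the rest to be $2$. Hence $h$ depends only on the coordinates in $T:=\{c_t\mid t\neq t_0\}\cup\{s_1,\dots,s_p\}$, it is $\{0,1\}$-valued, and $h(\vec{x})=1$ can hold only when exactly one coordinate indexed by $T$ equals $1$ and all the others equal $2$. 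Restricting $h$ to the coordinates in $T$ thus produces a function $\hat{h}$ with $\hat{h}^{-1}(\{1\})\subseteq D_{\card{T}}$, so $\hat{h}\in F_{\card{T}}$; consequently $h$ belongs to $F$ up to inessential arguments and $h\in F'$.

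I expect this last case to be the main obstacle: one has to argue carefully that after the forced assignments the composite depends on no coordinate outside $T$ and that the value $1$ is attained only on tuples carrying a single $1$ among the coordinates of $T$. The delicate point is the possible coincidence of a projection index $c_t$ with one of the essential indices $s_j$ of $\hat{g}$; such a coincidence can only remove tuples from $\hat{h}^{-1}(\{1\})$, and therefore preserves the inclusion $\hat{h}^{-1}(\{1\})\subseteq D_{\card{T}}$. The two preliminary cases and the reduction to a genuine $\hat{f}\in F_l$ with $\essenar{\hat{f}}=l\geq 2$ are routine consequences of Lemma~\ref{lemma:the_shape_of_F'}.
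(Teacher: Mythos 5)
Your proof is correct, but it organizes the verification differently from the paper. The paper checks closure under the five Mal'cev operations $\zeta,\tau,\Delta,\nabla,\star$: the first four reduce immediately to the minor-closure of Lemma~\ref{lemma:the_shape_of_F'}\eqref{eq:F'closed_under_minors}, and $\star$ only ever composes $f$ with a \emph{single} inner function $g$ in the first slot, so the paper's case analysis never has to deal with more than one non-projection among the inner arguments; its main case is the direct verification that $\hat{f}(\hat{g}(x_1,\dots,x_{m'}),x_{m'+1},\dots,x_{m'+n'-1})$ satisfies \eqref{eq:referee_a} and \eqref{eq:referee_b}. You instead verify closure under general superposition $f(g_1,\dots,g_n)$ in one pass, which forces the extra case of two or more non-projection inner functions; you dispose of it with the pleasant observation that two coordinates of the inner tuple are then confined to $\{0,1\}$, so the tuple can never reach $D_l$ and the composite is identically $0$. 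Your remaining main case (exactly one non-projection $g'_{t_0}$, the others arbitrary, possibly repeated, projections) strictly generalizes the paper's $\star$-case, and your treatment of it is sound: the propagation argument --- a value $1$ of the composite forces the $D_l$-pattern on the outer tuple, hence $x_{c_t}=2$ for $t\neq t_0$ and the $D_p$-pattern on $(x_{s_1},\dots,x_{s_p})$, hence a single $1$ among the coordinates indexed by $T$ --- is exactly the computation the paper performs for $\hat{h}$, and you correctly note that a coincidence $c_t=s_j$ only shrinks $\hat{h}^{-1}(\{1\})$ and so cannot violate \eqref{eq:referee_b}. What the paper's route buys is a shorter case analysis (at the cost of invoking the Mal'cev-operation presentation of clones); what yours buys is a self-contained argument for full superposition with no appeal to that formalism. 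Both rest on the same two pillars, namely Lemma~\ref{lemma:the_shape_of_F'} and the fact that a member of $F'$ is either a projection or $\{0,1\}$-valued with $1$-preimage inside the relevant $D_k$.
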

\begin{proof}
We prove that $F'$ is closed under $\zeta$, $\tau$, $\Delta$, $\nabla$ and $\star$ as they are defined in \cite{Mal76} (cf. \cite{CouLeh12,PosKal79}). 
To this end, let $f\in F'_n$. Lemma~\ref{lemma:the_shape_of_F'}\eqref{eq:F'closed_under_minors} yields that $\mathop{\zeta} f\in F'_n$, $\mathop{\tau} f\in F'_n$, $\mathop{\Delta}f\in F'_{n-1}$, and $\mathop{\nabla} f\in F'_{n+1}$.
Let $g\in F'_m$ and let $h$ be the function defined by $h(\vec{x})=(f\star g)(\vec{x})=f(g(x_1,\dots, x_{m}),x_{m+1},\dots , x_{m+n-1})$ for all $\vec{x}\in Z^{m+n-1}$. We prove that $h$ belongs to $F'$.
If $f$ is constant, then $h$ is a  constant mapping with the same
function value as $f$. Hence $h$ is a minor of $f$, and thus by
Lemma~\ref{lemma:the_shape_of_F'}\eqref{eq:F'closed_under_minors},
$h \in F'$. If $f$ is a projection, then either $h$ is a projection or $h=g$. In both cases $h$ belongs to $F'$.
If $g$ is a projection, then $h$ is a minor of $f$. Hence Lemma~\ref{lemma:the_shape_of_F'}\eqref{eq:F'closed_under_minors} yields $h\in F'$.
Finally, let us assume that both $f$ and $g$ belong to $F$ up to inessential arguments and $f$ is not constant. Then there exist $1\leq j_1<\dots <j_{n'}\leq n$, $1\leq i_1<\dots <i_{m'}\leq m$, $\hat{f}\in F_{n'}$ and $\hat{g}\in F_{m'}$ such that for all $\vec{x}\in Z^n$ and for all $\vec{y}\in Z^m$ we have $f(\vec{x})=\hat{f}(x_{j_1},\dots, x_{j_{n'}})$ and $g(\vec{y})=\hat{g}(y_{i_1},\dots, y_{i_{m'}})$. If $j_1\geq 2$, then for all $\vec{x}\in Z^{m+n-1}$ we have $h(\vec{x})=\hat{f}(x_{m+j_1 -1},\dots, x_{m+j_{n'}-1})$. Thus, $h$ is a minor of $\hat{f}$ and Lemma~\ref{lemma:the_shape_of_F'}\eqref{eq:minors_of_F_are_in_F'} yields $h\in F'$.
Let us now assume that $j_1=1$ and let $\hat{h}$ be defined by $\hat{h}(x_1,\dots, x_{m'+n'-1})=\hat{f}(\hat{g}(x_1,\dots, x_{m'}),x_{m'+1},\dots, x_{m'+n'-1})$ for all $\vec{x}\in Z^{m'+n'-1}$. We now prove that $\hat{h}\in F$. Since $\hat{f}$ belongs to $F$, $\hat{h}$ satisfies \eqref{eq:referee_a}. We now prove that $\hat{h}$ satisfies \eqref{eq:referee_b}. To this end, let $\vec{a}\in Z^{m'+n'-1}$ be such that $\hat{h}(\vec{a})=1$. Then, since $\hat{f}$ satisfies \eqref{eq:referee_b}, we infer that $(g(a_1,\dots, a_{m'}),a_{m'+1},\dots, a_{m'+n'-1})\in D_{n'}$. Moreover, since $g$ satisfies \eqref{eq:referee_a} and \eqref{eq:referee_b}, we have that $(a_1,\dots, a_{m'})\in D_{m'}$ and $(a_{m'+1},\dots, a_{m'+n'-1})=(2,\dots, 2)$. Thus, there exists $r\in \finset{m'}$ such that $a_r=1$ and for all $k\in\finset{m'+n'-1}\setminus\{r\}$ we have $a_k=2$. This proves that $\vec{a}\in D_{m'+n'-1}$. Thus, $\hat{h}$ satisfies \eqref{eq:referee_b}, and so $\hat{h}\in F$. Moreover, we have that for all $\vec{x}\in Z^{m+n-1}$
\[
h(x_1,\dots,x_{m+n-1})=\hat{f}(\hat{g}(x_{i_1},\dots, x_{i_{m'}}), x_{m+j_2 -1}, \dots, x_{m+j_{n'}-1}).
\]
Therefore $h$ belongs to $F$ up to inessential arguments, and so $h\in F'$. Therefore, $F'$ is closed under $\star$. 
\end{proof}
We make use of the following construction from \cite{JanMuc59} (cf. \cite[Chapter 3]{PosKal79}). For each $i\in \N\setminus\{1\}$, the function $g_i\colon Z^i \to Z$ is defined by 
\[
g_i(x_1,\dots, x_i)=
\begin{cases}
1 & \text{ if } (x_1,\dots, x_i)\in D_i,\\
0 & \text{ otherwise}.
\end{cases}
\]
For each $I\subseteq \N\setminus\{1\}$, $\ab{Z}_{I}$ is defined as $(Z;(g_i)_{i\in I})$.
\begin{lemma}\label{lemma:the_shape_of_gi_functions}
Let  $I\subseteq\N\setminus \{1\}$, let $k\in \N$ and let $g\in \Clo\ari{k}(\ab{Z}_I)$. Then we have:
\begin{enumerate}
\item \label{item:f_constant}  If $g$ is constant, then $g=0$.
\item \label{item:f_unary} If $g$ is essentially unary, then $g$ is a projection.
\item \label{item:f_esnbinary} If $g$ depends exactly on the arguments $i_1<\dots<i_l$ with $l\geq 2$, then the following two conditions are satisfied:
\begin{enumerate}
\item For all $\vec{b}\in Z^{k}$ we have $g(\vec{b})\neq 2$. 
\item For all $\vec{a}\in Z^{k}$ with $(a_{i_1},\dots, a_{i_l})\notin D_{l}$, we have $g(\vec{a})=0$. 
\end{enumerate}
\end{enumerate}
\end{lemma}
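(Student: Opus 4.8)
The plan is to avoid a direct induction on the generation of $\Clo(\ab{Z}_I)$ and instead deduce all three items from the structural analysis of $F'$ already carried out in Lemmas~\ref{lemma:the_shape_of_F'} and~\ref{lemma:F'_is_a_clone}. The first step is to observe that $\Clo(\ab{Z}_I)\subseteq F'$. Indeed, by Lemma~\ref{lemma:F'_is_a_clone} the set $F'$ is a clone; it contains every projection by the very definition of $F'$; and for each $i\in I$ the generator $g_i$ lies in $F_i\subseteq F\subseteq F'$, since every element of $F$ belongs to $F$ up to inessential arguments (take the identity reindexing). As $F'$ is a clone containing the generating set $\{g_i\mid i\in I\}\cup\projections[Z]$ of $\Clo(\ab{Z}_I)$, it contains $\Clo(\ab{Z}_I)$. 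Thus any $g\in\Clo\ari{k}(\ab{Z}_I)$ is an element of $F'_k$, and by the definition of $F'$ it is either a projection or it belongs to $F$ up to inessential arguments. This dichotomy is the backbone of the argument.

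Next I would dispatch items~\eqref{item:f_constant} and~\eqref{item:f_unary}. Item~\eqref{item:f_unary} is immediate: if $g$ is essentially unary, then $\essenar{g}=1$, so Lemma~\ref{lemma:the_shape_of_F'}\eqref{eq:no_essential_arity_one} already yields $g\in\projections[Z]$. For item~\eqref{item:f_constant} I would use the dichotomy. If $g$ is a projection, the claim is vacuous since a projection is nonconstant. Otherwise write $g(\vec{x})=\hat{g}(x_{j_1},\dots,x_{j_m})$ with $j_1<\dots<j_m$ and $\hat{g}\in F_m$; if $g$ is constant, then $\hat{g}$ is constant (its arguments range over all of $Z^m$), so $\essenar{\hat{g}}=0$ and Lemma~\ref{lemma:the_shape_of_F'}\eqref{eq:referee_c_originale} forces $\hat{g}=0$, whence $g=0$.

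For item~\eqref{item:f_esnbinary} I would again use the decomposition $g(\vec{x})=\hat{g}(x_{j_1},\dots,x_{j_m})$ with $\hat{g}\in F_m$. Assuming $g$ depends exactly on $i_1<\dots<i_l$ with $l\geq 2$, the function $g$ is nonconstant, so $\hat{g}\neq 0$ and Lemma~\ref{lemma:the_shape_of_F'}\eqref{eq:referee_e} gives $\essenar{\hat{g}}=m$. Since the arguments on which $g$ depends are exactly the $j_r$ on which $\hat{g}$ depends, I can identify $m=l$ and $j_r=i_r$ for all $r$. Part~(a) then follows because $g(\vec{b})=\hat{g}(b_{i_1},\dots,b_{i_l})$ lies in $\hat{g}(Z^l)\subseteq\{0,1\}$ by~\eqref{eq:referee_a}, so $g(\vec{b})\neq 2$. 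Part~(b) follows from~\eqref{eq:referee_b}: if $(a_{i_1},\dots,a_{i_l})\notin D_l$ then $\hat{g}(a_{i_1},\dots,a_{i_l})\neq 1$, and combined with the image bound this yields $g(\vec{a})=0$.

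I expect the only delicate point to be the identification of indices in item~\eqref{item:f_esnbinary}: one must argue carefully that, because $\hat{g}$ depends on all of its $m$ arguments (Lemma~\ref{lemma:the_shape_of_F'}\eqref{eq:referee_e}), the set of essential coordinates of $g$ equals $\{j_1,\dots,j_m\}$, so that the prescribed support $\{i_1,\dots,i_l\}$ coincides with it coordinatewise and the substitution $g(\vec{a})=\hat{g}(a_{i_1},\dots,a_{i_l})$ is legitimate. Everything else is a direct application of the defining conditions~\eqref{eq:referee_a} and~\eqref{eq:referee_b} of $F$ together with the inclusion $\Clo(\ab{Z}_I)\subseteq F'$; in particular, no fresh combinatorial computation with $D_l$ is required, since that work has already been absorbed into Lemma~\ref{lemma:the_shape_of_F'}.
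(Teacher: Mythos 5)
Your proposal is correct and follows essentially the same route as the paper's own proof: establish $\Clo(\ab{Z}_I)\subseteq F'$ via Lemma~\ref{lemma:F'_is_a_clone}, then read off all three items from the dichotomy in the definition of $F'$ together with Lemma~\ref{lemma:the_shape_of_F'} and conditions \eqref{eq:referee_a}--\eqref{eq:referee_b}. The index identification you flag as delicate is handled the same way (and slightly more tersely) in the paper, which invokes Lemma~\ref{lemma:the_shape_of_F'}\eqref{eq:referee_e} to get $\hat{g}\in F_l$ depending on all of its arguments.
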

\begin{proof}
Equations \eqref{eq:referee_a} and \eqref{eq:referee_b} imply that for all $i\in\N\setminus\{1\}$, $g_i\in F$. Thus, Lemma \ref{lemma:F'_is_a_clone} yields that for all $I\subseteq \N\setminus\{1\}$, $\Clo(\ab{Z}_I)\subseteq F'$.
Since $g\in\Clo\ari{k}(\ab{Z}_I)\subseteq F'$, then either $g\in\projections[Z]$ or $g$ belongs to $F$ up to inessential arguments. 
If $g$ is a constant, then it cannot be a projection. Hence it belongs to $F$ up to inessential arguments. Thus, by   Lemma~\ref{lemma:the_shape_of_F'}\eqref{eq:referee_c_originale}, is the constant zero. This proves \eqref{item:f_constant}.
We now prove \eqref{item:f_unary}. Let us assume that $g$ is essentially unary. Then by Lemma~\ref{lemma:the_shape_of_F'}\eqref{eq:no_essential_arity_one}, $g$ is a projection. This proves \eqref{item:f_unary}.
We now prove \eqref{item:f_esnbinary}. Let us assume that $g$ depends on its arguments $i_1<\dots< i_l$, let $\vec{b}\in Z^{k}$ and let $\vec{a}\in Z^{k}$ be such that $(a_{i_1},\dots, a_{i_l})\notin D_l$. 
Since $l\geq 2$, $g$ is not a projection. Since $g\in F'$, $g$ belongs to $F$ up to inessential arguments. This, together with Lemma~\ref{lemma:the_shape_of_F'}\eqref{eq:referee_e}, implies that there exists $\hat{g}\in F\ari{l}$ such that $g(x_1,\dots, x_k)=\hat{g}(x_{i_1},\dots, x_{i_l})$ for all $\vec{x}\in Z^k$, and $\hat{g}$ depends on all of its arguments. Since $\hat{g}\in F$, \eqref{eq:referee_a} implies that $g(\vec{b})=\hat{g}(b_{i_1},\dots, b_{i_l})\in\{0,1\}$. Moreover, since $(a_{i_1},\dots, a_{i_l})\notin D_{l}$, \eqref{eq:referee_b} and \eqref{eq:referee_a} yield $g(\vec{a})=\hat{g}(a_{i_1},\dots, a_{i_l})=0$. This proves \eqref{item:f_esnbinary}.
\end{proof}

\begin{proposition}\label{prop:iinIiffgibulletisalgebraicinZI}
Let $I\subseteq \N\setminus\{1\}$, let $i\in \N\setminus\{1\}$ and let $g_i^\bullet =\{(x_1,\dots, x_i,x_{i+1})\in Z^{i+1}\mid x_{i+1}=g(x_1,\dots, x_i)\}$. 
Then $g_i^\bullet \in\Alg \ab{Z}_I$ if and only if $i\in I$.
\end{proposition}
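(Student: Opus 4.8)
The plan is to prove the two implications separately; the backward one is immediate, and the forward one is the substantive part, resting on the description of the term functions of $\ab{Z}_I$ in Lemma~\ref{lemma:the_shape_of_gi_functions}. For ``$i\in I\Rightarrow g_i^\bullet\in\Alg\ab{Z}_I$'': if $i\in I$ then $g_i$ is a basic operation of $\ab{Z}_I$, hence a term function, and $g_i^\bullet$ is exactly the solution set of the single term equation $x_{i+1}\approx g_i(x_1,\dots,x_i)$; thus $g_i^\bullet\in\Alg\ab{Z}_I$.

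For the converse I would argue by contraposition: assuming $i\notin I$, I show $g_i^\bullet\notin\Alg\ab{Z}_I$. Note first that $i\notin I$ yields $g_i\notin\Clo(\ab{Z}_I)$, the non-generation property underlying the construction of \cite{JanMuc59}. The witness is the constant tuple $\vec{c}:=(2,\dots,2)\in Z^{i+1}$; since $(2,\dots,2)\notin D_i$ we have $g_i(2,\dots,2)=0$, so $\vec{c}\notin g_i^\bullet$. It suffices to prove $\vec{c}\in\algclo{\Clo(\ab{Z}_I)}{g_i^\bullet}$, for then $g_i^\bullet\neq\algclo{\Clo(\ab{Z}_I)}{g_i^\bullet}$ and $g_i^\bullet$ is not algebraic. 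By Lemma~\ref{lemma:basic_characterization_algebraic_closure} this amounts to checking $f(\vec{c})=h(\vec{c})$ for all $f,h\in\Clo\ari{i+1}(\ab{Z}_I)$ with $f\restrict{g_i^\bullet}=h\restrict{g_i^\bullet}$. The first step is to evaluate an arbitrary $f\in\Clo\ari{i+1}(\ab{Z}_I)$ at $\vec{c}$ via Lemma~\ref{lemma:the_shape_of_gi_functions}: if $f$ is a projection then $f(\vec{c})=2$; otherwise $f$ is constant, so $f(\vec{c})=0$ by \eqref{item:f_constant}, or $\essenar{f}\geq 2$ and then, the tuple of essential coordinates of $f$ at $\vec{c}$ being $(2,\dots,2)\notin D_l$, \eqref{item:f_esnbinary} gives $f(\vec{c})=0$. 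Hence $f(\vec{c})\in\{0,2\}$, with value $2$ precisely when $f$ is a projection, and it remains only to show that no projection can agree on $g_i^\bullet$ with a non-projection.

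So suppose such $f,h$ existed with $f$ a projection and $h$ not. A non-projection has range contained in $\{0,1\}$ by Lemma~\ref{lemma:the_shape_of_gi_functions}\eqref{item:f_constant},\eqref{item:f_esnbinary}, whereas a projection $\pi_s^{(i+1)}$ with $s\leq i$ attains the value $2$ on $g_i^\bullet$; so agreement forces $f=\pi_{i+1}^{(i+1)}$, giving $h(\vec{x},g_i(\vec{x}))=g_i(\vec{x})$ for every $\vec{x}\in Z^i$. Since $h$ is not a projection, Lemma~\ref{lemma:the_shape_of_gi_functions}\eqref{item:f_unary} shows it is not essentially unary, and it is not constant because $g_i$ is nonconstant on $D_i\neq\emptyset$; hence $\essenar{h}\geq 2$, and by \eqref{item:f_esnbinary} we may write $h(\vec{z})=\hat{g}(z_{i_1},\dots,z_{i_l})$ with $i_1<\dots<i_l$, $l\geq 2$ and $\hat{g}\in F_l$ depending on all of its arguments. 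I then split on whether $i_l=i+1$. If $i_l=i+1$, I evaluate $h(\vec{x},g_i(\vec{x}))=g_i(\vec{x})$ at the tuple $\vec{x}\in D_i$ whose unique $1$ lies in position $i_1$: the left-hand side becomes $\hat{g}(1,2,\dots,2,1)$, whose argument has two $1$'s and so lies outside $D_l$, while the right-hand side equals $1$, contradicting $\hat{g}\in F_l$ by~\eqref{eq:referee_b}. If $i_l\leq i$, then $\hat{g}(x_{i_1},\dots,x_{i_l})=g_i(x_1,\dots,x_i)$ as functions on $Z^i$; since $g_i$ depends on all $i$ arguments this forces $\hat{g}=g_i$ and $\{i_1,\dots,i_l\}=\finset{i}$, whence $g_i(x_1,\dots,x_i)=h(x_1,\dots,x_i,x_i)\in\Clo(\ab{Z}_I)$, contradicting $i\notin I$. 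Either way we reach a contradiction, so $f(\vec{c})=h(\vec{c})$ always holds and $\vec{c}\in\algclo{\Clo(\ab{Z}_I)}{g_i^\bullet}\setminus g_i^\bullet$.

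The one genuinely external ingredient is $g_i\notin\Clo(\ab{Z}_I)$ for $i\notin I$, i.e.\ the fact that $g_i$ cannot be generated from $\{g_j\mid j\in I\}$; the shape lemma does not deliver this by itself, since it only yields $\Clo(\ab{Z}_I)\subseteq F'$ while $g_i\in F'$. Everything else is a finite evaluation governed by Lemma~\ref{lemma:the_shape_of_gi_functions}, and the only place requiring careful bookkeeping is the case $i_l=i+1$, where one must confirm that placing the $1$ of $\vec{x}$ in position $i_1$ really produces a second $1$ in the argument of $\hat{g}$ (this holds also in the degenerate case $l=2$, where the argument is $(1,1)$).
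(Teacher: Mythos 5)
Your proof is correct and follows essentially the same route as the paper's: both reduce the problem, via Lemma~\ref{lemma:basic_characterization_algebraic_closure} and the classification of $\Clo(\ab{Z}_I)$ in Lemma~\ref{lemma:the_shape_of_gi_functions}, to showing that two term functions agreeing on $g_i^\bullet$ but separating a fixed witness tuple must consist of the projection $\pi_{i+1}^{(i+1)}$ and a function not depending on its $(i+1)$-th argument, whence $g_i\in\Clo(\ab{Z}_I)$, and both then invoke the same external independence result for the Janov--Mu\v{c}nik functions (\cite[Theorem 3.1.4]{PosKal79}, the ingredient you correctly flag as not following from the shape lemma) to conclude $i\in I$. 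The only differences are cosmetic: you argue by contraposition and use the witness $(2,\dots,2)$ rather than the paper's $(1,\dots,1)$, which lets you read off ``projection versus non-projection'' directly from the value at the witness and thereby compresses the paper's seven-case analysis.
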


\begin{proof}
If $i\in I$, then $g_i^\bullet \in\Alg \ab{Z}_I$ by definition.
We now assume that $g_i^\bullet\in\Alg \ab{Z}_I$ and prove that $i\in I$.

If $g_i^\bullet\in \Alg \ab{Z}_{I}$, since $(1,\dots, 1)\notin g_i^\bullet$, Lemma~\ref{lemma:basic_characterization_algebraic_closure} implies that there exist $f_1, f_2\in \Clo\ari{i+1}(\ab{Z}_I)$ such that $f_1\restrict{g_i^\bullet}= f_2\restrict{g_i^\bullet}$ and $f_1(1,\dots, 1)\neq f_2(1,\dots, 1)$. We now prove that one of the two is a projection and the other depends on at least two of its arguments. Seeking contradictions, let us suppose that this is not the case. We distinguish cases accordingly to the essential arity of $f_1$ and $f_2$.

\textbf{Case 1}: \emph{$f_1$ and $f_2$ are constant}: In this case $f_1(1,\dots, 1)=f_2(1,\dots, 1)$. 

\textbf{Case 2}: \emph{$f_1$ is constant and $f_2$ is essentially unary}: In this case, Lemma \ref{lemma:the_shape_of_gi_functions} yields that $f_1$ is the constant $0$ function and $f_2$ is a projection. Let $\vec{a}=(1,2,\dots, 2,1)$. Since $g(a_1,\dots, a_i)=1$, $\vec{a}\in  g_i^\bullet$. Lemma~\ref{lemma:the_shape_of_gi_functions} yields $f_1(\vec{a})=0$ and $f_2(\vec{a})\in\{1,2\}$. Hence $f_1(\vec{a})\neq f_2(\vec{a})$. Thus, $f_1\restrict{g_i^\bullet}\neq f_2\restrict{g_i^\bullet}$.

\textbf{Case 3}: \emph{$f_1$ is essentially unary and $f_2$ is constant}: This case is symmetric to Case 2. 

\textbf{Case 4}: \emph{$f_1$ is constant and $f_2$ depends on at least two of its arguments}: In this case Lemma \ref{lemma:the_shape_of_gi_functions} yields $f_1(1,\dots,1)=0=f_2(1,\dots,1)$. 

\textbf{Case 5}: \emph{$f_1$ is depends on at least two of its arguments and $f_2$ is constant}: This case is symmetric to Case 4. 

\textbf{Case 6}: \emph{$f_1$ and $f_2$ are both essentially unary}: In this case Lemma \ref{lemma:the_shape_of_gi_functions}\eqref{item:f_unary} implies that both $f_1$ and $f_2$ are projections. Thus $f_1(1,\dots, 1)=1=f_2(1,\dots,1)$. 

\textbf{Case 7}: \emph{$f_1$ and $f_2$ both depend on at least two of their arguments}: In this case Lemma \ref{lemma:the_shape_of_gi_functions}\eqref{item:f_esnbinary} yields $f_1(1,\dots,1)=0=f_2(1,\dots,1)$.  

Thus, we have proved that one among $f_1$ and $f_2$ is a projection, while the other depends on at least two of its arguments.
Without loss of generality, let us assume that $f_2$ is a projection and that $f_1$ depends on at least two of its arguments.
Let
\[
T:=\{\vec{a}\in Z^{i+1}\mid a_{i+1}=1 \text{ and } (a_1,\dots, a_i)\in D_i\}.
\]
Note that $T\subseteq g_i^\bullet$.
Let $j\in\{1,\dots, i, i+1\}$ be such that $f_2(x_1, \dots, x_{i+1})=x_j$ for
all $\vec{x} \in Z^{i+1}$. We claim $j=i+1$. Seeking a contradiction, let us suppose that $j\leq i$. Let $\vec{x}\in T$ be such that $\vec{x}(j)=2$. Then, since $f_1\restrict{T}=f_2\restrict{T}$, we have $f_1(\vec{x})=2$. On the other hand, Lemma \ref{lemma:the_shape_of_gi_functions}\eqref{item:f_esnbinary} implies that $f_1(\vec{x})\neq 2$. Thus we get the desired contradiction and deduce that $f_2$ is the $(i+1)$-th projection.
Next, we prove that $f_1$ does not depend on its $(i+1)$-th argument. Seeking a contradiction, let us suppose that $f_1$ depends on its $(i+1)$-th argument. Then, since $f_1$ depends on at least two of its arguments, there exists $j\in \finset{i}$ such that $f_1$ depends also on its $j$-th argument. Let $\vec{a}\in T$ be such that $\vec{a}(j)=1$. Since $f_1\restrict{T}=f_2\restrict{T}$, we have that $f_1(\vec{a})=f_2(\vec{a})=a_{i+1}=1$. On the other hand, Lemma~\ref{lemma:the_shape_of_gi_functions} implies that $f_1(\vec{a})=0$ because $a_j=a_{i+1}=1$. From this
contradiction, we conclude that $f_1$ does not depend on its $(i+1)$-th argument.

Let $f\colon Z^i \to Z$ be defined by $f(x_1,\dots x_i):=f_1(x_1,\dots, x_i,x_1)$. 
Since $f_1$ does not depend on its $(i+1)$-th argument, for each $\vec{a}\in g_i^\bullet$, we have
\begin{equation}\label{eq:equazione_che_prove_la_inclusione_dei_graphi_di_f_e_gi}
f(a_1,\dots, a_i)= f_1(a_1,\dots, a_{i},a_1)=
f_1(a_1,\dots,a_{i+1})=f_2(a_{1},\dots,a_{i+1})= a_{i+1}.
\end{equation}
We show that $g_i=f$. To this end, let $\vec{a}\in Z^i$. Then $(a_1,\dots, a_i, g_i(\vec{a}))\in g_i^\bullet$, and thus \eqref{eq:equazione_che_prove_la_inclusione_dei_graphi_di_f_e_gi} yields $f(\vec{a})=g_i(\vec{a})$. 
Since $f\in \Clo\ari{i}(\ab{Z}_I)$, we deduce that $g_i\in\Clo\ari{i}(\ab{Z}_I)$. We now apply {\cite[Theorem 3.1.4]{PosKal79}}  and deduce that $i\in I$.
\end{proof}
\begin{corollary}\label{cor:twotothealephzeroclonesonthreelementset}
On the set $Z=\{0,1,2\}$ there are exactly $2^{\aleph_0}$ algebraically inequivalent clones. 
\end{corollary}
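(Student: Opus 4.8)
The plan is to exploit Proposition~\ref{prop:iinIiffgibulletisalgebraicinZI}, which shows that the algebraic geometry of $\ab{Z}_I$ records precisely the set $I$. From this the assignment $I\mapsto\Alg\ab{Z}_I$ will be seen to be injective on the power set of $\N\setminus\{1\}$, and a cardinality count will finish the lower bound.

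In detail, for each $I\subseteq\N\setminus\{1\}$ I would take the clone $\Clo(\ab{Z}_I)$ on $Z$. Given two distinct subsets $I,J\subseteq\N\setminus\{1\}$, without loss of generality there is some $i\in I\setminus J$. Proposition~\ref{prop:iinIiffgibulletisalgebraicinZI} then gives $g_i^\bullet\in\Alg\ab{Z}_I$ (because $i\in I$) and $g_i^\bullet\notin\Alg\ab{Z}_J$ (because $i\notin J$), so $\Alg\ab{Z}_I\neq\Alg\ab{Z}_J$ and hence $\Clo(\ab{Z}_I)\not\sim_{\text{alg}}\Clo(\ab{Z}_J)$. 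Equivalently, the proposition shows that
\[
I=\{\,i\in\N\setminus\{1\}\mid g_i^\bullet\in\Alg\ab{Z}_I\,\},
\]
so $I$ can be reconstructed from $\Alg\ab{Z}_I$ and the map $I\mapsto\Alg\ab{Z}_I$ is injective. Since $\N\setminus\{1\}$ is countably infinite, its power set has cardinality $2^{\aleph_0}$, and therefore there are at least $2^{\aleph_0}$ pairwise algebraically inequivalent clones on $Z$.

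For the matching upper bound I would observe that every clone on the finite set $Z$ is a subset of the countable set of all finitary operations on $Z$; hence there are at most $2^{\aleph_0}$ clones on $Z$ (the upper estimate underlying \cite{JanMuc59}), and \emph{a fortiori} at most $2^{\aleph_0}$ equivalence classes modulo $\algeq$. Combining the two bounds yields exactly $2^{\aleph_0}$ algebraically inequivalent clones.

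The only substantial content of the proof is Proposition~\ref{prop:iinIiffgibulletisalgebraicinZI}, which I may assume; it pins down each graph $g_i^\bullet$ as a marker detecting whether $i\in I$. Consequently the corollary itself presents no real obstacle---the injectivity of $I\mapsto\Alg\ab{Z}_I$ and the cardinality bookkeeping are routine. The one point deserving a moment's care is the upper bound, which however follows immediately from the countability of the set of finitary operations on a finite set.
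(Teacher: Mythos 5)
Your proposal is correct and follows essentially the same route as the paper: both derive injectivity of $I\mapsto\Alg\ab{Z}_I$ from Proposition~\ref{prop:iinIiffgibulletisalgebraicinZI} and combine this lower bound with the trivial upper bound of $2^{\aleph_0}$ clones on a finite set. The cardinality bookkeeping is handled the same way, so no further comment is needed.
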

\begin{proof}
Let $\setofclone{Z}$ be the set of all clones on $Z$ and 
let $\psi\colon\potenza{\N\setminus\{1\}}\to\setofclone{Z}/{\algeq}$
be defined by $I\mapsto \Clo(\ab{Z}_I)/{\algeq}$. Proposition~\ref{prop:iinIiffgibulletisalgebraicinZI} yields that for all $I\neq J\in \potenza{\N\setminus\{1\}}$ we have $\Clo(\ab{Z}_I)\algeq \Clo(\ab{Z}_J)$ if and only if $I=J$. Therefore, $\psi$ is injective. Hence there are at least $2^{\aleph_0}$ algebraically inequivalent clones on $Z$. Since by \cite[Theorem 3.1.4]{PosKal79} there are at most $2^{\aleph_0}$ clones on $Z$, we can conclude that there are exactly $2^{\aleph_0}$ algebraically inequivalent clones on $Z$.  
\end{proof}

\begin{proof}[Proof of Theorem \ref{teor:cont_many_alg_in_clones}]
We first show that for a set $A$ and an element $u\notin A$, $A\cup\{u\}$ has at least as many algebraically inequivalent clones as $A$. To this end, let $A$ be a finite set with $\card{A}\geq 3$, let $\setofclone{A}$ be the set of all clones on $A$, let $u\notin A$, let $A^\oplus=A\cup\{u\}$, let $\setofclone{A^\oplus}$ be the set of all clones on $A^\oplus$, and let $\Phi\colon \setofclone{A}\to \setofclone{A^\oplus}$ be defined by
\[
\Phi(\funset{C})=\underset{n\in \N}{\bigcup}\{f:(A^\oplus)^{n}\to A^\oplus \,\mid\, f\restrict{A^n}\in\funset{C}\}.\]
We first prove that for all $\funset{C}\in\setofclone{A}$, $\Phi(\funset{C})$ is a clone on $A^\oplus$.
Clearly, all the projections belong to $\Phi(\funset{C})$. 
Let $f\in\Phi(\funset{C})\arii{n}$, let $t_1,\dots, t_n\in\Phi(\funset{C})\arii{k}$. Then 
\[
f(t_1,\dots, t_n)\restrict{A^{k}}=f(t_1\restrict{A^{k}},\dots, t_n\restrict{A^{k}})=f\restrict{A^{n}}(t_1\restrict{A^{k}},\dots, t_n\restrict{A^{k}}).
\]
The definition of $\Phi$ yields $f\restrict{A^{n}}, t_1\restrict{A^{k}},\dots, t_n\restrict{A^{k}}\in \funset{C}$. Thus, $f\restrict{A^{n}}(t_1\restrict{A^{k}},\dots, t_n\restrict{A^{k}})$ belongs to $\funset{C}$ and $f(t_1,\dots, t_n)\in\Phi(\funset{C})$.
Therefore, $\Phi(\funset{C})$ is a clone on $A^\oplus$.

Next, we prove that $\Phi$ satisfies 
\begin{equation}\label{eq:property_algebraic_sets_of_phi_C}
\forall \funset{C}\in \setofclone{A}, \forall n\in \N, \forall B\subseteq (A^\oplus)^n\colon B\in \Alg\Phi(\funset{C})\Leftrightarrow B\cap A^{n}\in \Alg \funset{C}.
\end{equation}
For proving \eqref{eq:property_algebraic_sets_of_phi_C}, let $\funset{C}\in \setofclone{A}$, let $n\in\N$, let $B\subseteq (A^\oplus)^n$ and let us assume that $B\in \Alg\ari{n}\Phi(\funset{C})$. We prove that $B\cap A^{n}\in\Alg\ari{n}\funset{C}$. 
To this end, let $\vec{b}\in A^{n}\setminus B$. By Lemma~\ref{lemma:basic_characterization_algebraic_closure} it suffices to prove that there are $\hat{f_1},\hat{f_2}\in \funset{C}$ such that $\hat{f_1}\restrict{A^{n}\cap B}=\hat{f_2}\restrict{A^{n}\cap B}$ and $\hat{f_1}(\vec{b})\neq \hat{f_2}(\vec{b})$. Since $B\in \Alg\ari{n}\Phi(\funset{C})$ and $\vec{b}\notin B$, Lemma~\ref{lemma:basic_characterization_algebraic_closure} yields that there exist $f_1,f_2\in \Phi(\funset{C})$ such that $f_1\restrict{B}=f_2\restrict{B}$ and $f_1(\vec{b})\neq f_2(\vec{b})$. Let $\hat{f_1}=f_1\restrict{A^{n}}$ and let $\hat{f_2}=f_2\restrict{A^{n}}$.  Then clearly $\hat{f_1},\hat{f_2}\in\funset{C}$, $\hat{f_1}\restrict{A^{n}\cap B}=\hat{f_2}\restrict{A^{n}\cap B}$ and $\hat{f_1}(\vec{b})\neq \hat{f_2}(\vec{b})$. Thus, $B\cap A^{n}\in \Alg \funset{C}$.

Let us now assume that $B\cap A^{n}\in \Alg \funset{C}$. We prove that $B\in \Alg\ari{n}\Phi(\funset{C})$. Let $\vec{a}\in (A^\oplus)^n\setminus B$. By Lemma~\ref{lemma:basic_characterization_algebraic_closure} it suffices to show that there are $f_1,f_2\in\Phi(\funset{C})$ such that $f_1\restrict{B}=f_2\restrict{B}$ and $f_1(\vec{a})\neq f_2(\vec{a})$. 
We split the proof into two cases.

\textbf{Case 1}: $\vec{a}\in A^{n}$: In this case Lemma~\ref{lemma:basic_characterization_algebraic_closure} implies that there are $g_1,g_2\in \funset{C}$ such that $g_1\restrict{B\cap A^n}=g_2\restrict{B\cap A^n}$ and $g_1(\vec{a})\neq g_2(\vec{a})$. Then for $i\in\{1,2\}$, we set
\[
f_i(\vec{x})=\begin{cases}g_i(\vec{x})&\text{ if } \vec{x}\in A^n,\\u &\text{ otherwise}.
\end{cases}
\]
By the definition of $\Phi(\funset{C})$, both $f_1$ and $f_2$ belong to $\Phi(\funset{C})$, $f_1\restrict{B}=f_2\restrict{B}$ and, since $\vec{a}\in A^{n}$, $f_1(\vec{a})\neq f_2(\vec{a})$.

\textbf{Case 2}: $\vec{a}\notin A^{n}$: Let $v_1,v_2\in A$ with $v_1\neq v_2$. For $i\in \{1,2\}$, we define 
\[
f_i(\vec{x})=
\begin{cases}
u &\text{ if } \vec{x}\in (A^\oplus)^n\setminus (A^{n}\cup \{\vec{a}\}),\\
x_1 &\text{ if } \vec{x}\in A^n,\\
v_i &\text{ if } \vec{x}\in \{\vec{a}\}.
\end{cases}
\]
By construction, we have that $f_1\restrict{A^n},f_2\restrict{A^{n}}\in \funset{C}$. Hence $f_1,f_2\in\Phi(\funset{C})$. Moreover, $f_1\restrict{B}=f_2\restrict{B}$ and $f_1(\vec{a})\neq f_2(\vec{a})$. 
Thus, we can conclude that $B\in\Alg\Phi(\funset{C})$.
Equation \eqref{eq:property_algebraic_sets_of_phi_C} implies that for all $\funset{C},\funset{D}\in\setofclone{A}$, if $\funset{D}\not\sim_{\text{alg}} \funset{C}$ then $\Phi(\funset{C})\not\sim_{\text{alg}} \Phi(\funset{D})$. 

Since by Corollary~\ref{cor:twotothealephzeroclonesonthreelementset}  there are $2^{\aleph_0}$ algebraically inequivalent clones on the set $\{0,1,2\}$ and since  there are at most $2^{\aleph_0}$ clones on a finite set, we deduce that on each finite set $A$ with $\card{A}\geq 3$ there are exactly $2^{\aleph_0}$ algebraically inequivalent clones. 
\end{proof}

\section*{Acknowledgements}
The authors thank
Tamás Waldhauser for discussions on the questions addressed in the present note and the referee, who has suggested the content of Lemmas \ref{lemma:absorbing_polynomials} and \ref{lemma:the_shape_of_gi_functions} to replace our originally more involved argument for proving Theorems \ref{teor:inf_many_expansions_Zp2n} and \ref{teor:cont_many_alg_in_clones}. 
\bibliographystyle{amsplain}
\bibliography{bibliografia.bib}

\providecommand*{\url}[1]{\texttt{\detokenize{#1}}}
  \ifx\SetBibliographyCyrillicFontfamily\undefined\def\SetBibliographyCyrillicFontfamily{\relax}\fi
  \def\Cyr#1{\bgroup\SetBibliographyCyrillicFontfamily\fontencoding{T2A}\selectfont{#1}\egroup}
  \def\Palatalization#1{\bgroup\fontencoding{T1}\selectfont\v{#1}\egroup}
\providecommand{\bysame}{\leavevmode\hbox to3em{\hrulefill}\thinspace}
\providecommand{\MR}{\relax\ifhmode\unskip\space\fi MR }
\providecommand{\MRhref}[2]{%
  \href{http://www.ams.org/mathscinet-getitem?mr=#1}{#2}
}
\providecommand{\href}[2]{#2}
\begin{thebibliography}{10}

\bibitem{AgoDemHan83}
Istv{\'{a}}n {\'{A}}goston, J{\'{a}}nos Demetrovics, and L{\'{a}}szl{\'{o}}
  Hann{\'{a}}k, \emph{On the number of clones containing all constants (a
  problem of {R}. {M}c{K}enzie)}, Lectures in universal algebra ({S}zeged,
  1983), Colloq. Math. Soc. J\'{a}nos Bolyai, vol.~43, North-Holland,
  Amsterdam, 1986, pp.~21--25.

\bibitem{Aic10}
Erhard Aichinger, \emph{Constantive {M}a{\Palatalization{l}}cev clones on
  finite sets are finitely related}, {P}roc. {A}mer. {M}ath. {S}oc.
  \textbf{138} (2010), no.~10, 3501--3507.

\bibitem{Aic14}
\bysame, \emph{On the direct decomposition of nilpotent expanded groups}, Comm.
  Algebra \textbf{42} (2014), no.~6, 2651--2662.

\bibitem{AicMay07}
Erhard Aichinger and Peter Mayr, \emph{Polynomial clones on groups of order
  {$pq$}}, Acta Math. Hungar. \textbf{114} (2007), no.~3, 267--285.

\bibitem{AicMud13}
Erhard Aichinger and Neboj\v{s}a Mudrinski, \emph{On various concepts of
  nilpotence for expansions of groups}, Publ. Math. Debrecen \textbf{83}
  (2013), 583--604.

\bibitem{AicRos20}
Erhard Aichinger and Bernardo Rossi, \emph{A clonoid based approach to some
  finiteness results in universal algebraic geometry}, {A}lgebra {U}niversalis
  \textbf{81} (2020), no.~1, Paper No. 8, 7.

\bibitem{Bul01}
Andrei Bulatov, \emph{On the number of finite {M}a{\Palatalization{l}}tsev
  algebras}, {C}ontributions to General Algebra 13 ({V}elk\'{e} {K}arlovice,
  1999/{D}resden, 2000), Heyn, Klagenfurt, 2001, pp.~41--54.

\bibitem{BurSan81}
Stanley Burris and Hanamantagouda~P. Sankappanavar, \emph{A course in universal
  algebra}, Graduate Texts in Mathematics, vol.~78, Springer-Ver\-lag, New
  York, 1981, available on-line at
  \url{http://www.math.uwaterloo.ca/~snburris/htdocs/UALG/univ-algebra.pdf}.

\bibitem{CouLeh12}
Miguel Couceiro and Erkko Lehtonen, \emph{{G}alois theory for sets of
  operations closed under permutation, cylindrification, and composition},
  {A}lgebra {U}niversalis \textbf{67} (2012), no.~3, 273--297.

\bibitem{DanMyaRem12}
\`Evelina~Yu. Daniyarova, Alexei~G. Myasnikov, and Vladimir~N. Remeslennikov,
  \emph{Algebraic geometry over algebraic structures. {II}. {F}oundations},
  Fundam. Prikl. Mat. \textbf{17} (2011/12), no.~1, 65--106, translation in J.
  Math. Sci. (N.Y.) \textbf{185} (2012), no. 3, 389--416.

\bibitem{Fio21}
Stefano Fioravanti, \emph{Expansions of abelian square-free groups}, Internat.
  J. Algebra Comput. \textbf{31} (2021), no.~4, 623--638.

\bibitem{Idz99}
Pawe{\l}~M. Idziak, \emph{Clones containing {M}a{\Palatalization{l}}tsev
  operations}, Internat. J. Algebra Comput. \textbf{9} (1999), no.~2, 213--226.

\bibitem{JanMuc59}
{Ju}rij~I. Janov and A{\Palatalization{l}}bert~A. Mu{\v{c}}nik,
  \emph{{E}xistence of {$k$}-valued closed classes without a finite basis},
  Dokl. Akad. Nauk SSSR \textbf{127} (1959), no.~1, 44--46, (Russian).

\bibitem{Mal76}
Anatolij~I. Ma{\Palatalization{l}}cev, \emph{Iterative {P}ost algebras}, Nauka,
  Novosibirsk, 1976.

\bibitem{May08}
Peter Mayr, \emph{Polynomial clones on squarefree groups}, Internat. J. Algebra
  Comput. \textbf{18} (2008), no.~4, 759--777.

\bibitem{MckMcnTay88}
Ralph~N. Mc{K}enzie, George~F. Mc{N}ulty, and Walter~F. Taylor, \emph{Algebras,
  lattices, varieties. {V}ol. {I}}, The Wadsworth \& Brooks/Cole Mathematics
  Series, vol.~{I}, Wadsworth \& Brooks/Cole Advanced Books \& Software,
  Monterey, CA, 1987.

\bibitem{Pin17a}
Aleksandr~G. Pinus, \emph{On algebraically equivalent clones}, Algebra Logika
  \textbf{55} (2016), no.~6, 760--768, translation in Algebra Logic \textbf{55}
  (2016), no. 6, 501--506.

\bibitem{Plo98}
Boris~I. Plotkin, \emph{Some concepts of algebraic geometry in universal
  algebra}, Algebra i Analiz \textbf{9} (1997), no.~4, 224--248, translation in
  St. Petersburg Math. J. \textbf{9} (1998), no. 4, 859--879.

\bibitem{PosKal79}
Reinhard P{\"o}schel and Lev~A. Kalu{\v{z}}nin, \emph{Funk\-tio\-nen- und
  {R}e\-la\-tio\-nen\-al\-ge\-bren}, Ma\-the\-ma\-ti\-sche
  {M}o\-no\-gra\-phi\-en, vol.~15, {VEB} {D}eut\-scher {V}er\-lag der
  {W}is\-sen\-schaf\-ten, Berlin, 1979.

\bibitem{Pos41}
Emil~L. Post, \emph{The two-valued iterative systems of mathematical logic},
  Annals of Mathematics Studies, vol.~5, Princeton University Press, Princeton,
  N. J., 1941.

\bibitem{TotWal17}
Endre T{\'{o}}th and Tam{\'{a}}s Waldhauser, \emph{On the shape of solution
  sets of systems of (functional) equations}, Aequationes Math. \textbf{91}
  (2017), no.~5, 837--857.

\end{thebibliography}
\end{document}